\numberwithin{equation}{section}
\numberwithin{figure}{section}
\renewcommand{\ALG@name}{Iterative Scheme}
\newtheorem{proposition}{Proposition}
\newtheorem{theorem}{Theorem}
\newtheorem{lemma}{Lemma}
\newtheorem{remark}{Remark}
\newtheorem{corollary}{Corollary}
\newtheorem{definition}{Definition}
\newtheorem{example}{Example}
\begin{document}

\title{Closed-form expressions for projectors onto polyhedral sets in Hilbert spaces}

\subjclass[2010]{ 46C05, 47N10, 49K27, 52A07.}
\keywords{		projection onto halfspaces, convex optimization, explicit solution of optimization problem, quadratic programming problems in Hilbert spaces}
\author{
	Krzysztof E. Rutkowski$^1$ 
}
\thanks{$^1$ Warsaw University of Technology, 		 \href{mailto:k.rutkowski@mini.pw.edu.pl}{k.rutkowski@mini.pw.edu.pl} .}
\maketitle
\tableofcontents{}

\begin{abstract}
	We provide formulas for projectors onto a polyhedral set, i.e. the intersection of a finite number of halfspaces. To this aim we formulate the problem of finding the projection as a convex optimization problem and we solve explicitly sufficient and necessary optimality conditions. This approach has already been successfully applied in
	deriving formulas for projection onto the intersection of two halfspaces. 
	We also discuss possible generalizations to Banach spaces.
\end{abstract}

\section{Introduction}
Projections onto convex closed sets play an important role in constructions of algorithms for solving optimization problems 
(see \cite{complementatity_web} for nonlinear complementarity problems and variational inequalities).
In general, projections onto intersections of convex sets
are obtained as limits of iterative processes, see e.g.  
\cite{a_new_projection_method, Accelerating_the_Convergence, swiss_army_knives,  cegielski2012iterative,  GUBIN19671, projection_methods_im_conic,  an_extended_projective_formula}. 

The idea of finding a closed-form expression for projector onto a linear subspace by solving explicitly the corresponding optimization problem goes back to Pshenichnyj \cite[Theorem 1.19]{Pshenichnyj}. This idea has also been used by Bauschke and Combettes in \cite[Proposition 28.19, Proposition 28.20]{Bauschke_2011_convex_analysis} to provide explicit formulas for the projection onto the intersection of two halfspaces. 
The obtained formulas were at the core of the algorithm approximating the 
Kuhn-Tucker set for the  pair of dual monotone inclusions as proposed in \cite{Alotaibi_2015_best}.
Recently, a finite algorithm for projection onto an isotone projection cone was given in \cite{nemeth_how_to_project}. 
This algorithm allows one to improve considerably the performance of a class of algorithms for solving complementarity problems \cite{iterative_methods_for_nonlinear}.

In this paper we provide a closed-form expression for the projector onto polyhedral sets in Hilbert spaces. The results of the present paper are applied in constructing inertial algorithms for approximation of the Kuhn-Tucker set for monotone inclusions \cite{inertial_proximal}.

The starting point of our considerations is Theorem 6.41 of \cite{deutsch2001best} which provides the Kuhn-Tucker conditions for the convex optimization problem related to projections. Analogous approach was presented in \cite{on_an_exact} to solve explicitly an optimization problems with second order cone constraints. It is essential in our approach that:
(1) in Hilbert space we have a simple formula for the derivative of the norm, (2) the number of halfspaces is finite. An analogous approach 
in Banach spaces depends strongly on differentiability properties of the norm. 

Let $H$ be a real Hilbert space equipped with a scalar product $\langle \cdot \ |\ \cdot \rangle :\ H\times H \rightarrow \mathbb{R}$ and the associated norm $\|\cdot \|$. For any closed convex subset $K\subset H$, let $P_K(x)$ denote the projection of
$x\in H$ onto $K$. Let $C_i:=\{ h \in H \ |\ \langle h \ |\ u_i \rangle \leq \eta_i \}$, $u_i \in H\backslash\{0\}$, $\eta_i\in \mathbb{R}$,  for $i=1,\dots,n$ be a finite family of halfspaces.
Halfspaces are clearly convex sets, and, by the Riesz representation theorem, halfspaces are also closed subsets of the Hilbert space $H$.

Let $C\subset H$ be defined as
\begin{equation*}
	C:=\bigcap_{i=1}^n C_i.
\end{equation*}

We derive closed-form expressions for the projector $P_C(x)$ of an element $x\in H$ onto $C$ when $C$ is nonempty. Our framework takes into account all possible relationships between vectors $u_i$ for $i=1,\dots,n$, e.g. we do not assume that $u_{i}$, $i=1,\dots,n$ are linearly independent. 	
In Banach spaces this approach does not provide, in general, explicit formulas for projections. In some particular cases we give verifiable criteria to check whether a given $\bar{x}$ is a projection. 

The organization of the paper is as follows. In section \ref{section:Projection} 
we provide a refinement of the existing theorem on formulas for the projection onto polyhedral sets (Proposition \ref{prop:system_coeff}).
It is achieved by the analysis of optimality conditions for the optimization problem related to finding the projection $P_C(x)$.
In section \ref{section:results}
we use Proposition \ref{prop:system_coeff} to provide explicit formulas for projection $P_C(x)$. This is the content of Theorem \ref{prop:projection_conditions} which is our main result. 
In section \ref{section:latticial}
we compare our approach with the already existing approaches to provide explicit formulas for projections, in particular we compare Theorem \ref{prop:projection_conditions} of section \ref{section:results}
with Theorem 2 of \cite{nemeth_how_to_project}.
In section \ref{section:Banach} we investigate projections onto $C$ in Banach spaces and we provide some criteria to verify whether $\bar{x}$ is a projection of $x$.

\textbf{Notation.} Let $n$ be a strictly positive integer number. We reserve the symbol $N$ to the set defined as $N:=\{1,2,\dots,n\}$. Symbols $H,B$ denote Hilbert and Banach space, respectively.  
A function $g:\ H\rightarrow (-\infty,\infty]$ is proper if it is not equal to $+\infty$ on the whole space.
When $G$ is a matrix of dimensions $m\times k$ and $I\subset \{1,\dots,m\}$, $J\subset\{1,\dots,k\}$, $I,J\neq \emptyset$ the symbol $G_{I,J}$ denotes the submatrix of $G$ composed by rows indexed by $I$ and columns indexed by $J$ only. For any Gateaux differentiable function $f:\, B\rightarrow \mathbb{R}$, $f^\prime (x)$ denote the Gateaux derivative of $f$ at $x$ and $f^\prime(x,d)$ denotes the directional derivative of $f$ at $x$ in direction $d$. For any $i,j \in \mathbb{N}$ the symbol $\delta_{i,j}$ denotes the Kronecker delta.

\section{Projections}\label{section:Projection}

Let $x \in H$, $C=\bigcap_{i\in N} C_i$, where $C_i=\{h \in H \ |\ \langle h \ |\ u_i \rangle \leq \eta_i   \}$, $i \in N$. The optimization problem
\begin{equation}\label{problem:optimization_distance}
	\min_{h \in C}\ \frac{1}{2}\|h-x\|^2
\end{equation}
is equivalent to finding the projection of $x$ onto $C$. 
This is a quadratic programming problem with linear inequality constraints on $H$  (see \cite{perturbation_analysis_of_optimization_problems, quadratic_programming_in_real_hilbert_spaces} and \cite{formulations_of_support_vector_machines} for applications).

To this problem we can apply the optimality conditions for general convex optimization problem of the form
\begin{align}
	\begin{aligned}\label{prob:minimaztion_functional}
		\min_{x\in H}  F_0(x)\\
		F_i(x)\leq 0, i\in N,
	\end{aligned}
\end{align}
where $F_0,\ F_i:\ H\rightarrow \mathbb{R}$, $i\in N$ are functionals on  $H$. In the sequel we use the following form of the Kuhn-Tucker conditions for the problem \eqref{prob:minimaztion_functional}\footnote{Originally generalizations of the Kuhn-Tucker necessary optimality conditions to infinite dimensional spaces were given in \cite{studies_in_linear_and_nonlinear}, \cite{convex_programming_in_normalized}, \cite{nonlinear_programming_in_Banach}.  }.

\begin{proposition}\label{prop:optimization_problem1} \cite[Theorem 11.3]{Lectures_on_Mathematical_Theory} (see also \cite[Proposition 3.118]{perturbation_analysis_of_optimization_problems})
	Let $F_0$, $F_i:\ H\rightarrow \mathbb{R}$, $i\in N$ be convex and
	continuously differentiable on $H$ and 
	\begin{equation*}
		F_i(x):=\langle x \ |\ u_i \rangle - \eta_i,\quad 0\neq u_i \in H,\quad \eta_i \in \mathbb{R},\quad i \in N.
	\end{equation*}
	Sufficient and necessary conditions for minimum at $\bar{x}\in H$ are:
	\begin{align}
		\begin{aligned}\label{system:propsolution}
			\nu_i\geq 0,\quad F_i(\bar{x})\leq 0, \quad \nu_i F_i(\bar{x})=0,\quad i \in N,\\
			F_0^\prime (\bar{x}) + \nu_1 F_1^\prime (\bar{x}) + \dots + \nu_n F_n^\prime (\bar{x}) = 0.
		\end{aligned}
	\end{align}
\end{proposition}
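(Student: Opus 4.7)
The plan is to treat the two implications separately, since sufficiency follows from a short convexity argument while necessity requires a first-order analysis coupled with a finite-dimensional Farkas step.

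For the sufficient direction, assume \eqref{system:propsolution} holds at $\bar{x}$. Convexity of $F_0$ gives, for every $x \in H$,
\begin{equation*}
F_0(x) - F_0(\bar{x}) \geq \langle F_0'(\bar{x}) \mid x-\bar{x} \rangle.
\end{equation*}
I would substitute the stationarity condition $F_0'(\bar{x}) = -\sum_{i\in N}\nu_i u_i$ and use that $\langle u_i \mid x-\bar{x}\rangle = F_i(x) - F_i(\bar{x})$ for the affine $F_i$, obtaining
\begin{equation*}
F_0(x) - F_0(\bar{x}) \geq -\sum_{i\in N}\nu_i F_i(x) + \sum_{i\in N}\nu_i F_i(\bar{x}).
\end{equation*}
The second sum vanishes by complementary slackness, and the first is nonnegative on the feasible set because $\nu_i \geq 0$ and $F_i(x) \leq 0$. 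Hence $\bar{x}$ is a global minimum.

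For the necessary direction, let $\bar{x}$ be a minimum and write $I(\bar{x}) := \{i \in N : F_i(\bar{x}) = 0\}$ for the active set. Because the $F_i$ are affine, every direction $d \in H$ satisfying $\langle d \mid u_i\rangle \leq 0$ for $i \in I(\bar{x})$ is feasible from $\bar{x}$ along a sufficiently short segment; optimality of $\bar{x}$ then forces $\langle F_0'(\bar{x}) \mid d \rangle \geq 0$ for every such $d$. Testing this with $\pm d$ for $d$ orthogonal to $\{u_i : i \in I(\bar{x})\}$ shows that $F_0'(\bar{x})$ lies in the finite-dimensional subspace $V := \operatorname{span}\{u_i : i \in I(\bar{x})\}$. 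The classical Farkas lemma applied inside $V$ then yields nonnegative multipliers $\nu_i$, $i \in I(\bar{x})$, with $F_0'(\bar{x}) = -\sum_{i\in I(\bar{x})}\nu_i u_i$. Setting $\nu_i := 0$ for $i \notin I(\bar{x})$ delivers the full KKT system \eqref{system:propsolution}.

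The main obstacle lies in the necessity step, specifically the passage from the variational inequality on the linearized cone to the existence of nonnegative multipliers. In a general Hilbert space, finitely generated convex cones need not be closed, so a naive infinite-dimensional Farkas lemma is unavailable. The saving feature is the finiteness of $N$: the relevant cone sits inside $V$, which is finite-dimensional, and the classical Farkas lemma applies there without difficulty. No constraint qualification beyond affineness of the $F_i$ is needed, since for affine constraints the tangent cone to the feasible set at $\bar{x}$ coincides with the linearized cone. Alternatively, one may short-circuit the Farkas construction by directly invoking the abstract multiplier rule \cite[Theorem 11.3]{Lectures_on_Mathematical_Theory}, as the author does.
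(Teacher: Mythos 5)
Your argument is correct, but note that the paper does not prove this proposition at all: it is imported verbatim from the cited references ([Theorem 11.3] of the lecture notes and [Proposition 3.118] of Bonnans--Shapiro), so there is no in-paper proof to compare against. What you supply is the standard self-contained KKT argument for a convex objective with finitely many affine constraints, and both halves go through: sufficiency is the usual gradient-inequality computation combined with complementary slackness and the sign conditions, and necessity correctly exploits that for affine constraints the feasible directions at $\bar{x}$ are exactly those $d$ with $\langle d \mid u_i\rangle \leq 0$ on the active set (so no Slater-type qualification is needed), reduces to the finite-dimensional span $V$ of the active $u_i$, and invokes Farkas there. One small inaccuracy in your closing discussion: a finitely generated cone in a Hilbert space \emph{is} always closed, precisely because it lives in the finite-dimensional subspace $V$ where Weyl's theorem applies; the genuine failures of closedness occur for conical hulls of infinite families. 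This does not affect your proof, since you work inside $V$ anyway, but the motivation for that detour is stated too strongly. The edge case $I(\bar{x})=\emptyset$ (where $V=\{0\}$ and all multipliers vanish) is silently covered by your argument and could be mentioned for completeness.
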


Applying  Proposition \ref{prop:optimization_problem1} to functions  $F_0(\cdot)=\frac{1}{2} \| \cdot - x \|^2$ and $F_i(\cdot)=\langle \cdot \ |\ u_i \rangle - \eta_i $, $i\in N$, we obtain the following theorem due to Deutsch \cite{deutsch2001best}.


\begin{theorem}\cite[Theorem 6.41]{deutsch2001best}\label{theorem:Deutsch}
	Let $u_i\in H $, $\eta_i \in \mathbb{R}$, $i\in 1,\dots,n$ and $C=\bigcap_{i\in N} C_i\neq \emptyset$. If $x \in H$ then
	\begin{displaymath} 
		P_C(x)=x-\sum_{i=1}^n \nu_i u_i,
	\end{displaymath} 
	for any set of scalars $\nu_i$ that satisfy the following three conditions:
	\begin{equation}\label{theorem:Deutsch:cond1}
		\nu_i \geq 0, \quad \text{for}\ i=1,\dots,n,
	\end{equation}
	\begin{equation}\label{theorem:Deutsch:cond2}
		\langle x, u_i \rangle - \eta_i -\sum_{j=1}^n \nu_i \langle u_j \ |\  u_i \rangle \leq 0 \quad \text{for}\ i=1,\dots,n,
	\end{equation}
	\begin{equation}\label{theorem:Deutsch:cond3}
		\nu_i(\langle x, u_i \rangle - \eta_i -\sum_{j=1}^n \nu_i \langle u_j \ |\  u_i \rangle) = 0 \quad \text{for}\ i=1,\dots,n.
	\end{equation}
	Consequently, if $x \in H$ and $\bar{x}\in C$, then $\bar{x}=P_C(x)$ if and only if
	\begin{displaymath}
		\bar{x}=x-\sum_{i\in I(\bar{x})} \nu_i u_i,\quad  \text{for some}\ \nu_i\geq 0,
	\end{displaymath}
	where $I(\bar{x}):=\{i \in N \ |\ \langle \bar{x} \ |\ u_i \rangle = \eta_i  \}$.
\end{theorem}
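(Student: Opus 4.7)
The plan is to invoke Proposition \ref{prop:optimization_problem1} directly on the reformulation \eqref{problem:optimization_distance} of the projection problem, so the proof reduces to identifying derivatives and rewriting the generic KKT system \eqref{system:propsolution} in the present concrete form.

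First I would set $F_0(h):=\tfrac12\|h-x\|^2$ and $F_i(h):=\langle h\mid u_i\rangle-\eta_i$ for $i\in N$, and observe that each $F_i$ is affine (hence convex and continuously differentiable), while $F_0$ is strongly convex and continuously differentiable with $F_0^\prime(\bar x)=\bar x-x$ and $F_i^\prime(\bar x)=u_i$. Since $C$ is a nonempty closed convex subset of a Hilbert space, $P_C(x)$ exists and is unique, and by \eqref{problem:optimization_distance} it is precisely the unique minimizer of $F_0$ over $\{h:F_i(h)\le 0,\ i\in N\}$. Proposition \ref{prop:optimization_problem1} then asserts that $\bar x=P_C(x)$ iff there exist multipliers $\nu_i$ with $\nu_i\ge 0$, $F_i(\bar x)\le 0$, $\nu_i F_i(\bar x)=0$, and
\begin{equation*}
  (\bar x-x)+\sum_{i=1}^n\nu_i u_i=0.
\end{equation*}

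Next I would read off the theorem's three conditions. The stationarity equation above immediately gives $\bar x=x-\sum_{i=1}^n\nu_i u_i$. Substituting this expression for $\bar x$ into $F_i(\bar x)\le 0$ and $\nu_i F_i(\bar x)=0$ yields exactly \eqref{theorem:Deutsch:cond2} and \eqref{theorem:Deutsch:cond3}, while $\nu_i\ge 0$ is \eqref{theorem:Deutsch:cond1}. This proves the first assertion: any $(\nu_i)$ satisfying \eqref{theorem:Deutsch:cond1}--\eqref{theorem:Deutsch:cond3} yields $P_C(x)=x-\sum\nu_i u_i$.

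For the final equivalence, suppose $\bar x\in C$. If $\bar x=P_C(x)$, apply the forward direction to obtain multipliers $\nu_i\ge 0$ with $\bar x=x-\sum_{i=1}^n\nu_i u_i$; by complementary slackness $\nu_i=0$ whenever $i\notin I(\bar x)$, so the sum collapses to $\sum_{i\in I(\bar x)}\nu_i u_i$. Conversely, given such a representation with $\nu_i\ge 0$ for $i\in I(\bar x)$, extend by $\nu_i:=0$ for $i\notin I(\bar x)$; then \eqref{theorem:Deutsch:cond1} holds, \eqref{theorem:Deutsch:cond2} holds because $\bar x\in C$ means $F_i(\bar x)\le 0$ for all $i$, and \eqref{theorem:Deutsch:cond3} holds since the factor $\nu_i$ vanishes on inactive indices while $F_i(\bar x)=0$ on active ones. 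Invoking the first part gives $\bar x=P_C(x)$.

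There is essentially no hard step here: the content of the theorem is a bookkeeping translation of Proposition \ref{prop:optimization_problem1}. The only point meriting care is that \eqref{theorem:Deutsch:cond2}--\eqref{theorem:Deutsch:cond3} are obtained only after eliminating $\bar x$ via the stationarity equation, so one must verify that this elimination is reversible, which follows because $\bar x-x$ is recovered from the $\nu_i$ by $\bar x=x-\sum\nu_i u_i$.
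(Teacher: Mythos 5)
Your proof is correct and follows essentially the same route the paper intends: the paper itself derives this theorem by applying Proposition \ref{prop:optimization_problem1} to $F_0(\cdot)=\frac{1}{2}\|\cdot-x\|^2$ and $F_i(\cdot)=\langle\cdot\mid u_i\rangle-\eta_i$ and observing that \eqref{theorem:Deutsch:cond1}--\eqref{theorem:Deutsch:cond3} are exactly the resulting Kuhn--Tucker system, which is precisely your bookkeeping. (Deutsch's original argument uses the projection theorem and dual cones instead, but the paper explicitly adopts the KKT reading you give, so no discrepancy.)
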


This fact is proved in \cite{deutsch2001best} as an immediate consequence of the projection theorem in Hilbert spaces and representations of dual cones. 
Note that conditions \eqref{theorem:Deutsch:cond1}, \eqref{theorem:Deutsch:cond2}, \eqref{theorem:Deutsch:cond3} of Theorem \ref{theorem:Deutsch} are conditions \eqref{system:propsolution} of Proposition \ref{prop:optimization_problem1} when applied to problem \eqref{prob:minimaztion_functional}.


Let $\{u_i\}\in H$, $\eta_i\in\mathbb{R}$, $i\in N$ and let 
\begin{displaymath}
	G:=\left[
	\begin{array}{cccc}
		\|u_1\|^2 & \langle u_1 \ |\ u_2\rangle & \cdots & \langle u_1 \ |\ u_n \rangle\\
		\langle u_2 \ |\ u_1 \rangle & \|u_2\|^2 & & \langle u_2 \ |\ u_n \rangle\\
		\vdots & & \ddots  & \vdots\\
		\langle u_n \ |\ u_1 \rangle & \langle u_n \ |\ u_2 \rangle & \cdots & \|u_n\|^2
	\end{array}\right] .
\end{displaymath}

The matrix $G$ is called the \textit{Gram matrix} and has the following well-known property: for any $I\subset N$  $\det G_{I,I}\geq 0$ and $\det G_{I,I}=0$  if and only if vectors $u_i$, $i\in I$ are linearly dependent.

In Proposition \ref{prop:system_coeff} we derive equivalent conditions on scalars $\nu_i$. Due to the form of  \eqref{theorem:Deutsch:cond1}, \eqref{theorem:Deutsch:cond2}, \eqref{theorem:Deutsch:cond3} 
these conditions can be expressed in terms of the existence of positive solutions of systems of linear equations.


\begin{proposition}\label{prop:system_coeff} Let $C=\bigcap_{i\in N} C_i\neq \emptyset$, where 
	$C_i=\{h \in H \ |\ \langle h \ |\ u_i \rangle \leq \eta_i   \}$, $u_i 
	\in H\backslash\{0\}$, $\eta_i\in\mathbb{R}$ for $i\in N$ and let $x\in H\backslash C$.
	The point $\bar{x}$ is a projection of $x$ onto $C$ if and only if there exists $I\subset N$, $I\neq \emptyset$ such that (feasibility conditions)
	\begin{equation}\label{projection:formula}
		\bar{x}=x-\sum\limits_{i\in I} \nu_i u_i \in C,
	\end{equation}
	where $\nu_i$, $i\in I$, solve the following system of linear equations (complementarity slackness conditions)
	\begin{equation}\label{system:solution}
		\forall i \in I\quad \left\{
		\begin{array}{l}
			\langle x \ |\ u_i \rangle - \eta_i= \sum_{k\in I} \nu_k \langle u_k  \ |\ u_i \rangle, \\
			\nu_i>0.
		\end{array} \right.
	\end{equation}
	Moreover, there always exists at least one $I$ for which: (1) $\det G_{I,I}>0$, (2) system \eqref{system:solution} is solvable, (3) formula \eqref{projection:formula} holds.
\end{proposition}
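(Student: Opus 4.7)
The plan is to derive Proposition \ref{prop:system_coeff} as a sharpening of Theorem \ref{theorem:Deutsch}: the equivalence is essentially a rewriting of Deutsch's KKT-type characterization once attention is fixed on the support of the multiplier vector, while the ``moreover'' assertion is produced by a finite support-reduction argument that eliminates linear dependences among the active normals.

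For the forward direction of the equivalence, I would start from $\bar{x}=P_C(x)$ and invoke Theorem \ref{theorem:Deutsch} to obtain $\nu_i\geq 0$, $i\in I(\bar{x})$, with $\bar{x}=x-\sum_{i\in I(\bar{x})}\nu_i u_i$. Setting $I:=\{i\in I(\bar{x}):\nu_i>0\}$ yields a nonempty set (not all $\nu_i$ can vanish, for otherwise $\bar{x}=x\notin C$), and writing out $\langle\bar{x}\,|\,u_i\rangle=\eta_i$ for $i\in I$ produces exactly the linear system \eqref{system:solution}. Conversely, given $I$ and positive $\nu_i$ satisfying \eqref{projection:formula} and \eqref{system:solution}, I would extend $\nu$ by zero to $N\setminus I$ and verify conditions \eqref{theorem:Deutsch:cond1}--\eqref{theorem:Deutsch:cond3}: nonnegativity is immediate, \eqref{theorem:Deutsch:cond2} is just $\bar{x}\in C$ written out, and complementary slackness holds trivially outside $I$ and by \eqref{system:solution} on $I$. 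Theorem \ref{theorem:Deutsch} then yields $\bar{x}=P_C(x)$.

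For the ``moreover'' part, starting from any index set $I_0$ and positive $\nu_i$ with $\bar{x}=x-\sum_{i\in I_0}\nu_i u_i$ produced above, I would argue that whenever $\{u_i\}_{i\in I_0}$ is linearly dependent the support can be strictly reduced. Pick $\lambda\neq 0$ with $\sum_{i\in I_0}\lambda_i u_i=0$; after a sign change we may assume $\max_i\lambda_i>0$. The family $\nu_i(t):=\nu_i-t\lambda_i$ preserves the identity $\bar{x}=x-\sum_{i\in I_0}\nu_i(t)\,u_i$ for every $t$; the largest $t\geq 0$ keeping all $\nu_i(t)\geq 0$ is $t^*:=\min\{\nu_i/\lambda_i:\lambda_i>0\}>0$, at which at least one component drops to zero. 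Iterating this finite procedure terminates in an index set $I\subset I_0$ on which $\{u_i\}$ is linearly independent and the corresponding $\nu_i$ remain strictly positive. Linear independence is equivalent to $\det G_{I,I}>0$, and these positive $\nu_i$ automatically solve \eqref{system:solution}: take the scalar product of $x-\bar{x}=\sum_{k\in I}\nu_k u_k$ with each $u_i$, $i\in I$.

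The equivalence is essentially bookkeeping once Theorem \ref{theorem:Deutsch} is available; the main technical ingredient is the finite support-reduction step, where the delicate point is to choose the sign of $\lambda$ and the step size $t^*$ simultaneously so as to strictly decrease the support while preserving nonnegativity and the representation of $\bar{x}$. Because each iteration strictly reduces $|I_0|$, the procedure terminates after finitely many rounds and delivers the desired $I$.
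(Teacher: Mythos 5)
Your proposal is correct and follows essentially the same route as the paper: apply Theorem \ref{theorem:Deutsch}, restrict to the strictly positive support of the multipliers, and then remove linear dependences among the corresponding $u_i$ to reach an index set with $\det G_{I,I}>0$. The only differences are cosmetic --- you inline the standard Carath\'eodory-type support-reduction argument that the paper delegates to Lemma \ref{lemma:tech}, and you spell out the converse implication (extension of $\nu$ by zero and verification of \eqref{theorem:Deutsch:cond1}--\eqref{theorem:Deutsch:cond3}) which the paper leaves implicit.
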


The main contribution of this proposition is condition \eqref{system:solution} which replaces conditions \eqref{theorem:Deutsch:cond1}, \eqref{theorem:Deutsch:cond2}, \eqref{theorem:Deutsch:cond3} of Theorem \ref{theorem:Deutsch} and reduces the question of finding the projection onto $C$ to solving a consistent system of linear equations.

The proof of Proposition \ref{prop:system_coeff} is based on the following technical lemma.
\begin{lemma}\label{lemma:tech}(see, e.g. \cite[Lemma 6.33]{deutsch2001best})
	Let $u_i \in H$, $u_i \neq 0$, and $\tilde{\nu}_i\geq 0$ for $i\in N$, not all equal zero, and $w:=\sum_{i\in N} \tilde{\nu_i} u_i\neq 0$. There exist $I\subset N$ and $\nu_i>0$, $i\in I$ such that
	\begin{equation*}
		w=\sum_{i \in I} \nu_i u_i \quad \text{and} \quad \det G_{I,I}>0.
	\end{equation*} 
\end{lemma}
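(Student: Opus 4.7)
The plan is to prove the lemma by a Carathéodory-type minimal-support argument, which is the standard tool for problems of this shape. Concretely, I consider the (nonempty) collection of all tuples of nonnegative coefficients $(\nu_i)_{i\in N}$ that represent $w$ as a conic combination of the $u_i$'s, and I pick one whose support $I := \{i\in N : \nu_i > 0\}$ has minimal cardinality. Since $w \neq 0$, $I$ is nonempty, and by construction $\nu_i > 0$ for $i \in I$ and $w = \sum_{i \in I} \nu_i u_i$. So the first conclusion of the lemma is immediate, and the only thing left to verify is that $\det G_{I,I} > 0$.

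The key step is to show that the vectors $\{u_i\}_{i\in I}$ are linearly independent; the determinant statement then follows at once from the general property of the Gram matrix recalled just before the proposition. I would argue by contradiction: suppose there exist scalars $\alpha_i$, $i\in I$, not all zero, with $\sum_{i\in I} \alpha_i u_i = 0$. Up to replacing $\alpha$ by $-\alpha$, we may assume at least one $\alpha_i$ is strictly positive. Then for every $t \in \mathbb{R}$ the perturbed coefficients $\nu_i(t) := \nu_i - t\alpha_i$ still satisfy $\sum_{i\in I} \nu_i(t) u_i = w$. I choose
\[
t^\ast := \min\bigl\{ \nu_i / \alpha_i \ :\ i \in I,\ \alpha_i > 0 \bigr\},
\]
which is strictly positive because $\nu_i > 0$ on $I$. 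By construction, $\nu_i(t^\ast) \geq 0$ for all $i \in I$ (the indices with $\alpha_i \leq 0$ only increase, and the indices with $\alpha_i > 0$ are bounded from below by the choice of $t^\ast$), while at least one index attains $\nu_i(t^\ast) = 0$. Extending by zero outside $I$, this produces a new nonnegative representation of $w$ with support strictly contained in $I$, contradicting minimality.

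Hence $\{u_i\}_{i\in I}$ is linearly independent, so by the Gram-matrix property stated in the text, $\det G_{I,I} > 0$, and the conclusion of the lemma follows.

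The only subtlety I expect is the one already highlighted: ensuring we are allowed to assume some $\alpha_i > 0$ (handled by flipping the sign of the dependence) and verifying that $t^\ast$ is strictly positive and finite (which uses both $\nu_i > 0$ on $I$ and the existence of an index with $\alpha_i > 0$). Everything else is routine; the argument does not use Hilbert-space structure beyond what is packaged into the Gram matrix property, and it works verbatim for any finite family of vectors.
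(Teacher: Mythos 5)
Your proof is correct: the minimal-support choice, the sign-normalized linear dependence, the exit step $t^\ast=\min\{\nu_i/\alpha_i : \alpha_i>0\}>0$, and the final appeal to the Gram-matrix criterion are all sound, and the nonemptiness of $I$ follows from $w\neq 0$ as you note. The paper does not prove this lemma itself but cites Deutsch (Lemma 6.33), and your Carath\'eodory-type support-reduction argument is essentially the standard proof of that cited result, so the approaches coincide.
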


\begin{proof}[Proof  of Proposition \ref{prop:system_coeff} ]
	By Theorem \ref{theorem:Deutsch}, there exists $\tilde{\nu}_i$, $i\in N $, such that 
	\begin{equation}\label{formula:system}
		\forall i \in N\quad \left\{
		\begin{array}{l}
			\langle x - \sum_{k\in N} \tilde{\nu}_k u_k \ |\ u_i \rangle - \eta_i  \leq 0, \\
			\tilde{\nu}_i (\langle x - \sum_{k\in N} \tilde{\nu}_k u_k \ |\ u_i \rangle - \eta_i)=0,\\
			\tilde{\nu}_i\geq 0
		\end{array} \right.
	\end{equation}	
	and $\bar{x}$ defined as 
	\begin{equation}\label{formula:projection1}
		\bar{x}:=x-\sum\limits_{i\in N} \tilde{\nu}_i u_i
	\end{equation}
	is the projection of $x$ onto $C$.
	Let $J:=\{ i\in N \ |\ \tilde{\nu}_i>0 \}$. Since $x\notin C$, from \eqref{formula:projection1} we deduce that $J\neq \emptyset$. We rewrite formula \eqref{formula:projection1} in the form
	\begin{equation}\label{formula:projection2}
		\bar{x}=x-\sum\limits_{i\in J} \tilde{\nu}_i u_i
	\end{equation}
	and by \eqref{formula:system} we have
	\begin{equation}\label{formula:system2}
		\forall i \in J\quad \left\{
		\begin{array}{l}
			\langle x - \sum_{k\in J} \tilde{\nu}_k u_k \ |\ u_i \rangle - \eta_i  = 0, \\
			\tilde{\nu}_i> 0.
		\end{array} \right.
	\end{equation}
	The system \eqref{formula:system2} is of the form
	\begin{equation}\label{formula:system3}
		G_{J,J}[\tilde{\nu}_i]_{i \in J}=[\langle x \ |\ u_i \rangle - \eta_i]_{i \in J}
	\end{equation}
	and, by \eqref{formula:projection2}, we know that \eqref{formula:system3} has a strictly positive solution $\tilde{\nu}_i>0$, $i \in J$. 
	If $\det G_{J,J}=0$, then, by Lemma \ref{lemma:tech}, there exists $I\subset J$ and $\nu_i>0$, $i\in I$, such that $\det G_{I,I}\neq 0$ and $\sum_{j\in J} \tilde{\nu}_j u_j=\sum_{i\in I}\nu_i u_i$.
	The index set $I$ satisfies the requirements given in the assertion of the proposition.
\end{proof}
Let us note that the index set $I$ might be a one element set.

\section{Main results}\label{section:results}
In this section we provide explicit formulas for solutions to  optimization problem \eqref{problem:optimization_distance}. This is the content of Theorem \ref{prop:projection_conditions} which is our main result.

Let 
$I\subset N$ and $s_I(a):=\{ b \in I \ |\ b\leq a  \}$. We define
\begin{equation*}
	B_I^a:=\left\{\begin{array}{lcl}
		(-1)^{|s_I(a)|} & \text{if} & a \in I,  \\
		(-1)^{|I|+1} & \text{if} & a \notin I.
	\end{array}\right.
\end{equation*}

Let $w_i:=\langle x \ |\ u_i \rangle -\eta_i$, $i \in N$.
\begin{theorem} \label{prop:projection_conditions}
	Let $C=\bigcap\limits_{i=1}^n C_i\neq\emptyset$, where $C_i=\{h \in H \ |\ \langle h \ |\ u_i \rangle \leq \eta_i   \}$, $u_i\neq 0$, $\eta_i\in\mathbb{R}$, $i\in N$, $x\notin C$. Let $\text{rank}\ G=k$. Let $\emptyset \neq I\subset N$, $|I|\leq k$ be such that $\det G_{I,I}\neq 0$. Let
	\begin{equation}\label{coeff:plus}
		\nu_i:= \left\{
		\begin{array}{lcl}
			\sum_{j \in I} w_j B_I^{j} B_I^{i}
			\det G_{I\backslash j , I\backslash i} & \text{if} & |I|>1,\\
			w_i & \text{if} & |I|=1
		\end{array}\right.
		\quad \text{for all} \quad i \in I
	\end{equation}
	and, whenever  $ I^\prime:=N\backslash I$ is nonempty, let
	\begin{equation}\label{coeff:notplus}
		\nu_{i^\prime}:= 
		\sum_{j \in I\cup \{i^\prime\}} w_j B_I^{j} B_I^{i^\prime}
		\det G_{I , (I \cup i^\prime)\backslash j  }
		\quad \text{for all} \quad i^\prime \in I^\prime.
	\end{equation}

	If $\nu_i>0$ for $i \in I$ and $\nu_{i^\prime}\leq 0$ for all $i^\prime\in I^\prime$, then 
	\begin{equation*}
		P_C(x)=x-\sum\limits_{i \in I} \frac{\nu_i}{\det G_{I,I}}u_{i}.
	\end{equation*}
	Moreover, among all the elements of the set $\Delta$ of all subsets $I\subset N$ there exists at least one $I\in \Delta$ for which: (1) $\det G_{I,I}\neq 0$, (2) the coefficients $\nu_i$, $i \in I$ given by \eqref{coeff:plus} are positive, (3) the coefficients $\nu_{i^\prime}$, $i^\prime \in I^{\prime}$ given by \eqref{coeff:notplus} are nonpositive.
\end{theorem}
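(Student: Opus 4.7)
The plan is to treat the theorem as a Cramer's-rule reformulation of Proposition \ref{prop:system_coeff}. By that proposition, a candidate projection has the form $\bar{x}=x-\sum_{i\in I}\tilde\nu_i u_i$ where $(\tilde\nu_i)_{i\in I}$ is a strictly positive solution of the linear system $G_{I,I}\tilde\nu=(w_i)_{i\in I}$, and, additionally, $\bar x\in C$. Assuming $\det G_{I,I}\neq 0$, Cramer's rule solves the system uniquely by
\[
\tilde\nu_i=\frac{1}{\det G_{I,I}}\det(G_{I,I}\text{ with column }i\text{ replaced by }w_I),\qquad i\in I.
\]
First I would expand this replaced-column determinant by cofactors along the modified column. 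The cofactor sign attached to the $(j,i)$-entry of $G_{I,I}$ (with $i,j\in I$) is $(-1)^{|s_I(j)|+|s_I(i)|}=B_I^jB_I^i$, and the corresponding minor is $\det G_{I\setminus j,I\setminus i}$. This identifies the Cramer numerator with the quantity $\nu_i$ of formula \eqref{coeff:plus}, yielding $\tilde\nu_i=\nu_i/\det G_{I,I}$. Since $|I|\le k=\mathrm{rank}\,G$ and $\det G_{I,I}\ne 0$, the Gram-matrix property forces $\det G_{I,I}>0$, so the sign of $\tilde\nu_i$ equals the sign of $\nu_i$; the positivity hypothesis on the $\nu_i$ thus supplies the positive Lagrange multipliers required by \eqref{system:solution}.

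Next I would verify the feasibility condition $\bar x\in C$, which reduces to checking $\langle\bar x\mid u_{i'}\rangle-\eta_{i'}\le 0$ only for $i'\in I':=N\setminus I$, since equality on $I$ is built into the linear system. Substituting the formula for $\bar x$ gives
\[
\langle\bar x\mid u_{i'}\rangle-\eta_{i'}=w_{i'}-\sum_{i\in I}\tilde\nu_i\langle u_i\mid u_{i'}\rangle
=w_{i'}-(w_I)^\top G_{I,I}^{-1}G_{I,\{i'\}}.
\]
The Schur-complement identity rewrites the right-hand side as a ratio of determinants,
\[
\langle\bar x\mid u_{i'}\rangle-\eta_{i'}=\frac{1}{\det G_{I,I}}\det\!\begin{pmatrix}G_{I,I}&G_{I,\{i'\}}\\ (w_I)^\top & w_{i'}\end{pmatrix}.
\]
I would then expand this bordered $(|I|+1)\times(|I|+1)$ determinant along its last row, with $i'$ placed in the final row/column position. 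The entry $w_{i'}$ contributes $w_{i'}\det G_{I,I}$ (its signed minor is $\det G_{I,(I\cup\{i'\})\setminus\{i'\}}$ and the sign is $(B_I^{i'})^2=1$), and the entry $w_j$ for $j\in I$ contributes $w_j$ times a signed minor $\det G_{I,(I\cup\{i'\})\setminus\{j\}}$ with sign $(-1)^{|I|+1+|s_I(j)|}=B_I^jB_I^{i'}$. Summing yields exactly the quantity $\nu_{i'}$ of formula \eqref{coeff:notplus}, so $\langle\bar x\mid u_{i'}\rangle-\eta_{i'}=\nu_{i'}/\det G_{I,I}$, and the hypothesis $\nu_{i'}\le 0$ gives the desired inequality. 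Combined with the first paragraph, Proposition \ref{prop:system_coeff} then identifies $\bar x$ as $P_C(x)$.

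Finally, the existence clause follows directly from the ``moreover'' part of Proposition \ref{prop:system_coeff}: that proposition supplies some $I\subset N$ for which $\det G_{I,I}>0$ and the complementary slackness system admits a strictly positive solution yielding $\bar x=P_C(x)\in C$. For this $I$, the identifications established above ($\tilde\nu_i=\nu_i/\det G_{I,I}$ and $\langle\bar x\mid u_{i'}\rangle-\eta_{i'}=\nu_{i'}/\det G_{I,I}$) together with $\det G_{I,I}>0$ immediately imply $\nu_i>0$ for $i\in I$ and $\nu_{i'}\le 0$ for $i'\in I'$. The bound $|I|\le k$ is automatic, since if $|I|>\mathrm{rank}\,G$ the vectors $\{u_i\}_{i\in I}$ would be linearly dependent, contradicting $\det G_{I,I}>0$.

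The substantive step I expect to require most care is the bookkeeping of signs in the expansion of the bordered determinant: verifying that the cofactor signs produced by Laplace expansion align exactly with the $B_I^jB_I^{i'}$ pattern of \eqref{coeff:notplus} for all cases ($j\in I$ and $j=i'$). Once that identity is in hand, the rest is a clean translation between Cramer's rule, the Schur complement, and Proposition \ref{prop:system_coeff}.
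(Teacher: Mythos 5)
Your proposal is correct and follows essentially the same route as the paper: identify $\nu_i/\det G_{I,I}$ as the Cramer's-rule solution of the complementarity system, verify feasibility on $I'$ by showing $\langle\bar x\mid u_{i'}\rangle-\eta_{i'}=\nu_{i'}/\det G_{I,I}$, and invoke Proposition \ref{prop:system_coeff} for the existence clause. The only difference is cosmetic: you package the feasibility computation as a bordered-determinant/Schur-complement identity, whereas the paper carries out the same cofactor expansion term by term.
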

\begin{proof}
	Let $I\subset N$, $I\neq \emptyset$, $\det G_{I,I}\neq 0$. Let $\nu_i$ be given by \eqref{coeff:plus} for $i \in I$ and let $\nu_{i^\prime}$ be given by \eqref{coeff:notplus} for $i^\prime \in I^\prime$. Assume that $\nu_i>0$ for $i\in I$ and $\nu_{i^\prime}\leq 0$ for $i^\prime \in I^\prime$. The elements  $\tilde{\nu_i}:=\frac{\nu_i}{\det G_{I,I}}$ for $i \in I$ solve the system
	\begin{equation}\label{system:nuplus}
		\forall i \in I\quad \left\{
		\begin{array}{l}
			\langle x \ |\ u_i \rangle-\eta_i= \sum_{k\in I} \tilde{\nu}_k \langle u_k  \ |\ u_i \rangle, \\
			\tilde{\nu}_i>0
		\end{array} \right. 
	\end{equation}
	or, in the matrix form,
	\begin{displaymath} 
		G_{I,I}[\tilde{\nu}_i]_{i \in I}=[\langle x \ |\ u_i \rangle - \eta_i]_{i \in I},
	\end{displaymath}
	where the solution of this system satisfies $\tilde{\nu}_i>0$, $i \in I$.
	Thus, for all $i\in I$ we have $\tilde{\nu}_i( \langle x - \sum_{k \in I} \tilde{\nu}_k u_k\ |\  u_i \rangle-\eta_i)=0$ and consequently $\langle x-\sum_{k\in I } \tilde{\nu}_k u_k \ |\ u_i \rangle -\eta_i= 0  $ for $i \in I$.
	
	To prove that $\bar{x}=x-\sum_{i\in I} \nu_i u_i$ is the projection of $x$ onto $C$ it is enough to show that $\langle \bar{x} \ |\ u_{i^\prime} \rangle -\eta_{i^\prime} \leq 0$ for all $i^\prime \in I^\prime$. It is obvious in case  $I^\prime=\emptyset$, so suppose $I^\prime$ is nonempty. 
	\begin{enumerate} 
		\item Suppose $I=\{m\}$, where $m \in N$. Let $i^ \prime \in I^\prime $. Then
		\begin{align*}
			&\| u_m \|^2 (\langle x - \tilde{\nu}_m u_m \ |\ u_{i^\prime}  \rangle -\eta_{i^\prime}) = 	\| u_m \|^2 \langle x- \frac{\langle x \ |\ u_m \rangle-\eta_m}{\|u_m\|^2}        u_m    \ |\ u_{i^\prime}  \rangle-\|u_m\|^2\eta_i\\
			& = (\langle x \ |\ u_{i^\prime} \rangle -\eta_{i^\prime})\|u_m\|^2 - (\langle x \ |\ u_m \rangle - \eta_m) \langle u_m \ |\ u_{i^\prime} \rangle \\
			& = \sum_{j\in \{m,i^\prime\}} (\langle x \ |\ u_j \rangle - \eta_j)
			B_I^{j} B_I^{i^\prime}
			\det G_{I,(I\cup \{i^\prime \})\backslash \{j\}}\leq 0.
		\end{align*}
		Since $\|u_m\|^2>0$, $\langle \bar{x} \ |\ u_{i^\prime}  \rangle - \eta_{i^\prime} \leq 0 $ for all $i^\prime \in I^\prime$.
		\item Suppose $|I|\geq 2$. Let $i^ \prime \in I^\prime $. Then
		\begin{align*}
			&\det G_{I,I} ( \langle x - \sum_{k\in I}\tilde{\nu}_k u_k \ |\ u_{i^\prime}  \rangle -\eta_{i^\prime}	)	= (\langle x \ |\ u_{i^\prime} \rangle - \eta_{i^\prime})\det G_{I,I} - \sum_{k\in I} \nu_k  \langle u_k \ |\ u_{i^\prime} \rangle\\
			&=  \left(\langle x \ |\ u_{i^\prime} \rangle -\eta_{i^\prime} \right)\det G_{I,I} - \sum_{k\in I} \sum_{j \in I} (\langle x \ |\ u_j \rangle -\eta_j )B_I^j B_I^{k} \det G_{I\backslash \{j\} , I\backslash \{k\}}   \langle u_k \ |\ u_{i^\prime} \rangle\\ 
			&=  (\langle x \ |\ u_{i^\prime} \rangle -\eta_{i^\prime})\det G_{I,I} - \sum_{j \in I}( (\langle x \ |\ u_j \rangle-\eta_j)\sum_{k\in I}  B_I^j B_I^{k} \det G_{I\backslash \{j\} , I\backslash \{k\}}   \langle u_k \ |\ u_{i^\prime} \rangle)\\ 		
			& = (\langle x \ |\ u_{i^\prime}-\eta_{i^\prime}) \rangle  \det G_{I,I} -\sum_{j\in I} (\langle x \ |\ u_j \rangle - \eta_j) B_I^j B_{I\backslash j}^{i^\prime} \det G_{(I\cup \{i^\prime\})\backslash \{j\} ,I } \\
			& = (\langle x \ |\ u_{i^\prime} \rangle - \eta_{i^\prime}) \det G_{I,I} +\sum_{j\in I} (\langle x \ |\ u_j \rangle -\eta_j)B_I^j B_I^{i^\prime}  \det G_{I ,(I\cup \{i^\prime\})\backslash \{j\}  } \\
			& = \sum_{j \in I\cup \{i^\prime\}} (\langle x \ |\ u_j \rangle - \eta_j) B_I^j B_I^{i^\prime}  \det G_{I,(I\cup \{i^\prime\} )\backslash \{j\}}\leq 0.
		\end{align*}
		Since $\det G_{I,I}>0$, $\langle \bar{x} \ |\ u_{i^\prime}  \rangle - \eta_{i^\prime} \leq 0 $  for all $i^\prime \in I^\prime$. 
		
	\end{enumerate}
	The existence of $I\subset N$ such that $\nu_i>0$, $i\in I$ and $\nu_{i^\prime}\leq 0$, $i^\prime \in I^\prime$ is guaranteed by Proposition \ref{prop:system_coeff}.  	
\end{proof}

It is easy to see from the proof of Theorem \ref{prop:projection_conditions} that $\tilde{\nu}_i:=\frac{\nu_i}{\det G_{I,I}}$, $i\in I$, with $\nu_i$ defined by \eqref{coeff:plus}, solve system \eqref{system:nuplus} (complementarity slackness conditions) which can be rewritten as
\begin{displaymath}
	G_{I,I}[\bar{\nu}_i]_{i \in I}=[\langle x \ |\ u_i \rangle - \eta_i]_{i \in I},
\end{displaymath}
whereas $(\nu_{i^\prime})_{i^\prime \in I^\prime}$ allows us to check whether the resulting $\bar{x}$ belongs to the set $C$ (feasibility conditions).

Theorem \ref{prop:projection_conditions} suggests the following finite algorithm for finding the projection $P_C(x)$ for $x\notin C$:
Let $\Delta:=\{I_1,I_2,\dots,I_{2^n-1} \}$ be the collection of all nonempty subsets of $N$ and let $m=1$.\par
Step 1. Check, if $\det G_{I_m,I_m}\neq 0$. If not, let $m:=m+1$ and repeat Step 1.\par
Step 2. Solve the linear system
\begin{equation}\label{step:2}
	\langle x \ |\ u_i \rangle - \eta_i = \sum_{k\in I_m} {\nu}_k \langle u_k \ |\ u_i \rangle,\quad i \in I_m
\end{equation}
with respect to ${\nu}_k$, $k\in I_m$. If there exists $k\in I_m$ such that ${\nu}_k\leq 0$ let $m:=m+1$ and go to Step 1.\par
Step 3. Check if the following formula is satisfied
\begin{equation}\label{step:3}
	\forall i^\prime \in I_m^\prime \quad \langle x-\sum_{k \in I_m} {\nu}_k u_k \ |\ u_{i^\prime} \rangle \leq 0. 
\end{equation}
If not let $m:=m+1$ and go to Step 1.\par 
Step 4. The projection of $x$ onto $C$ is given by formula
\begin{equation}\label{step:4}
	P_C(x)=x-\sum_{i \in I_m} {\nu}_i u_i.
\end{equation}

By Theorem \ref{prop:projection_conditions}, among all the subsets $I\subset \Delta$ for which ${\nu}_k>0$, $k \in I$ given by \eqref{step:2} there exists at least one for which \eqref{step:3} holds.\par 
The proposed algorithm is suitable for parallelization. The parallelized version of the algorithm can be organized as follows. 
In Step 2 of the algorithm at most $2^{n}-1$ systems are solved of at most $n$ equations. In Step 3 for each solution of system from Step 2 we need to calculate at most $n-1$ scalar products. 
Let us observe that according to Theorem \ref{prop:projection_conditions} the representation \eqref{step:4} may not be unique.


\section{On latticial cone}\label{section:latticial}
In \cite{nemeth_how_to_project} the authors proposed a finite algorithm for finding the projection onto 
a class of cones, called \textit{latticial cones}. In this section we compare our approach 
developed in section \ref{section:results}
with the approach proposed in \cite{nemeth_how_to_project}, where 
the main tool was the Moreau decomposition theorem.

Let $K\subset H$ be a cone.
\
The polar of $K$ is the set
\begin{equation*}
	K^\circ:= \{ x\in H \ |\ \langle x \ |\ y \rangle \leq 0 \quad \forall y \in K \}.
\end{equation*}
\
\begin{definition}
	Let $H=\mathbb{R}^n$. $K$ is called  \textit{latticial} if $K=\text{cone} \{b_1,b_2,\dots,b_n\}$, where $b_1,b_2,\dots,b_n\in \mathbb{R}^n$ are linearly independent and
	\begin{equation*}
		\text{cone}\{b_1,b_2,\dots,b_n\}:=\{ x\in H \ |\ x=\sum_{i \in N} \alpha_i b_i, \ \alpha_i \geq 0\ \text{for}\ \ i \in N 
		\}.
	\end{equation*} 
	When $K=\text{cone}\{b_1,b_2,\dots,b_n\}$ we say that $K$ is generated by $b_1,b_2,\dots,b_n$.
\end{definition}
Any \textit{latticial cone} is closed and convex.
\begin{lemma} \label{lemma:lattical_polar} \cite{nemeth_how_to_project} Let $K\subset \mathbb{R}^n$ be a latticial cone generated by vectors $b_1,b_2,\dots,b_n$. The polar cone to $K$ can be represented as
	\begin{displaymath} 
		K^\circ = \{ \mu_1 u_1 + \mu_2 u_2 + \dots + \mu_n u_n \ | \ \mu_i\geq 0,\ i\in N  \},
	\end{displaymath}
	where $u_j$, $j\in N$, solves the system
	\begin{displaymath}
		\langle u_j \ |\ b_i \rangle = -\delta_{i,j},\quad i\in N,
	\end{displaymath}
\end{lemma}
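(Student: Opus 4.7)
The plan is to show the two inclusions $L \subseteq K^\circ$ and $K^\circ \subseteq L$, where $L:=\{\sum_{i\in N}\mu_i u_i \mid \mu_i\geq 0\}$, by leveraging the fact that the $u_j$ form (up to sign) the dual basis to $\{b_j\}_{j\in N}$.

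First, I would justify that the system $\langle u_j \mid b_i\rangle = -\delta_{i,j}$, $i \in N$, is uniquely solvable and that the resulting vectors $\{u_1,\dots,u_n\}$ are themselves linearly independent, hence a basis of $\mathbb{R}^n$. Indeed, since $b_1,\dots,b_n$ are linearly independent in $\mathbb{R}^n$, the linear map $u \mapsto (\langle u\mid b_1\rangle, \dots, \langle u\mid b_n\rangle)$ from $\mathbb{R}^n$ to $\mathbb{R}^n$ is injective, hence an isomorphism; the $u_j$'s are the preimages of the standard basis vectors (up to the sign), so they form a basis.

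Next, for the inclusion $L\subseteq K^\circ$, I would take $x=\sum_{i\in N}\mu_i u_i$ with $\mu_i\geq 0$ and an arbitrary $y=\sum_{j\in N}\alpha_j b_j\in K$ with $\alpha_j\geq 0$, and compute directly
\begin{equation*}
\langle x \mid y\rangle = \sum_{i,j\in N}\mu_i\alpha_j \langle u_i\mid b_j\rangle = -\sum_{i\in N}\mu_i\alpha_i \leq 0,
\end{equation*}
so $x\in K^\circ$.

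For the reverse inclusion $K^\circ\subseteq L$, take $x\in K^\circ$. Since $\{u_1,\dots,u_n\}$ is a basis, there exist unique scalars $\mu_i\in\mathbb{R}$ with $x=\sum_{i\in N}\mu_i u_i$. For each fixed $j\in N$ the vector $b_j$ lies in $K$, so $\langle x\mid b_j\rangle\leq 0$. But
\begin{equation*}
\langle x\mid b_j\rangle = \sum_{i\in N}\mu_i \langle u_i\mid b_j\rangle = -\mu_j,
\end{equation*}
which forces $\mu_j\geq 0$. Hence $x\in L$, finishing the proof. The argument is essentially routine linear algebra; no step is particularly hard, provided one first records that the defining system for the $u_j$ is the dual basis system (with a sign) and thus produces a basis of $\mathbb{R}^n$.
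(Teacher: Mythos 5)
Your proof is correct and complete: establishing that the $u_j$ solve the (negated) dual-basis system and hence form a basis of $\mathbb{R}^n$, then checking both inclusions by the bilinear computation $\langle u_i \mid b_j\rangle = -\delta_{i,j}$, is exactly the standard argument for this fact. Note that the paper itself offers no proof of this lemma --- it is quoted from \cite{nemeth_how_to_project} --- so there is nothing internal to compare against; your argument supplies a self-contained justification consistent with how the lemma is used later (e.g.\ in deriving \eqref{formula:alternative_latticial} and Corollary \ref{corollary:independence}).
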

Since $K$ is closed and convex, $K= \{ x \in \mathbb{R}^n \ |\ \langle y \ |\ x \rangle \leq 0\quad \forall y \in K^\circ  \}$. By Lemma \ref{lemma:lattical_polar}, 
\begin{equation}\label{formula:alternative_latticial}
	x \in K \quad \Leftrightarrow \quad \langle x \ | u_i \rangle \leq 0\quad \forall i \in N.
\end{equation}

\begin{corollary}\label{corollary:independence} \cite{nemeth_how_to_project}
	For each subset $I$ of indices $I\subset N$, the vectors $b_i$, $i\in I$, $u_j$, $j\in N\backslash I $ are linearly independent.
\end{corollary}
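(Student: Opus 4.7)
The plan is to argue directly by setting up a hypothetical linear dependence and separately killing the two groups of coefficients, exploiting the biorthogonality relation $\langle u_j \mid b_i \rangle = -\delta_{i,j}$ which is the defining property of the $u_j$'s.

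Concretely, I would suppose $\sum_{i \in I} \alpha_i b_i + \sum_{j \in N\setminus I} \beta_j u_j = 0$ and first take the inner product with $b_\ell$ for each $\ell \in I$. Because $\ell \in I$ and $j \in N\setminus I$ force $j \neq \ell$, the cross terms $\langle u_j \mid b_\ell \rangle = -\delta_{j,\ell}$ all vanish, leaving the homogeneous system
\begin{equation*}
\sum_{i \in I} \alpha_i \langle b_i \mid b_\ell \rangle = 0, \qquad \ell \in I.
\end{equation*}
The coefficient matrix here is the Gram matrix of the family $\{b_i\}_{i\in I}$, which is a subfamily of the linearly independent family $b_1,\dots,b_n$, hence positive definite. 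Therefore $\alpha_i = 0$ for every $i \in I$.

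It then remains to show that $\sum_{j \in N\setminus I} \beta_j u_j = 0$ implies $\beta_j = 0$ for all $j \in N\setminus I$. For this I would pair with $b_k$ for each $k \in N\setminus I$: the relation $\langle u_j \mid b_k \rangle = -\delta_{j,k}$ immediately yields $-\beta_k = 0$. (Equivalently, since $b_1,\dots,b_n$ is a basis of $\mathbb{R}^n$, the matrix $B = [b_1 \,|\, \cdots \,|\, b_n]$ is invertible, and the $u_j$ are up to sign the columns of $B^{-\top}$, so the whole family $u_1,\dots,u_n$ is linearly independent, a fortiori any subfamily is.)

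I do not anticipate any real obstacle in this argument; the only subtle point is noticing that pairing with the $b_\ell$ (rather than with the $u_\ell$) decouples the two sums, because the biorthogonality $\langle u_j \mid b_\ell \rangle = -\delta_{j,\ell}$ is asymmetric in what we know about its two sides. Pairing with $u_\ell$ instead would leave the unknown quantities $\langle u_j \mid u_\ell \rangle$ and $\langle b_i \mid u_\ell \rangle$ mixed together, so the order of operations (first eliminate the $\alpha_i$'s using the $b_\ell$, then eliminate the $\beta_j$'s using the $b_k$) is what makes the argument clean.
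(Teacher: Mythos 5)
Your argument is correct. Note that the paper itself gives no proof of this corollary --- it is stated with a citation to the N\'emeth--N\'emeth paper --- so there is no in-paper argument to compare against; your proof is a valid, self-contained justification. Both steps check out: pairing a hypothetical dependence $\sum_{i\in I}\alpha_i b_i+\sum_{j\in N\setminus I}\beta_j u_j=0$ with $b_\ell$, $\ell\in I$, kills every cross term because $\langle u_j\mid b_\ell\rangle=-\delta_{j,\ell}=0$ for $j\notin I$, $\ell\in I$, and the resulting homogeneous system has the positive definite Gram matrix of the linearly independent subfamily $\{b_i\}_{i\in I}$ as coefficient matrix, forcing $\alpha=0$; then pairing the remaining relation with $b_k$, $k\in N\setminus I$, gives $-\beta_k=0$ directly. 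Your observation about the order of pairings is the right one. For what it is worth, the argument usually given for this fact runs slightly differently: set $v:=\sum_{i\in I}\alpha_i b_i=-\sum_{j\in N\setminus I}\beta_j u_j$ and note from the first representation that $\langle v\mid u_j\rangle=0$ for all $j\in N\setminus I$, whence $\|v\|^2=-\sum_j\beta_j\langle u_j\mid v\rangle=0$, so $v=0$ and both families of coefficients vanish by the linear independence of the $b_i$ and of the $u_j$ separately. That route avoids invoking positive definiteness of the Gram matrix, but yours is equally rigorous and arguably more mechanical.
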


\begin{theorem} \cite{Moreau1965} \label{theorem:decomposition} (Moreau decomposition theorem) 
	Let $K\subset \mathbb{R}^n$ be a closed convex cone and $x\in \mathbb{R}^n$. The following statements are equivalent.
	\begin{enumerate}
		\item $x=y+z$, $y\in K$, $z\in K^\circ$ and $\langle y \ |\ z \rangle =0$,
		\item $y=P_K x$ and $z=P_{K^\circ} x$.
	\end{enumerate}
\end{theorem}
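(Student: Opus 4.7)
The plan is to establish the equivalence $(1)\Leftrightarrow(2)$ using the variational (obtuse angle) characterization of projection onto a closed convex set: for closed convex $C\subset\mathbb{R}^n$, one has $y=P_C(x)$ if and only if $y\in C$ and $\langle x-y\mid y'-y\rangle\leq 0$ for every $y'\in C$. The essential feature that makes the proof go through cleanly here is that $K$ and $K^\circ$ are cones, hence closed under nonnegative scaling, and this lets the general variational inequality collapse to an orthogonality identity at the projection point.

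For $(2)\Rightarrow(1)$, suppose $y=P_K(x)$. I would first plug the two admissible test points $y'=2y\in K$ and $y'=0\in K$ into the variational inequality; the two resulting inequalities combine to give $\langle x-y\mid y\rangle=0$. With this identity in hand, the inequality for arbitrary $y'\in K$ reduces to $\langle x-y\mid y'\rangle\leq 0$, which is exactly the statement that $x-y\in K^\circ$. Defining $\tilde z:=x-y$, I immediately obtain $x=y+\tilde z$, $y\in K$, $\tilde z\in K^\circ$, and $\langle y\mid\tilde z\rangle=0$. To identify $\tilde z$ with $z=P_{K^\circ}(x)$, I would verify the characterization for $\tilde z$ against $K^\circ$: for any $z'\in K^\circ$, $\langle x-\tilde z\mid z'-\tilde z\rangle=\langle y\mid z'\rangle-\langle y\mid\tilde z\rangle=\langle y\mid z'\rangle\leq 0$, since $y\in K$ and $z'\in K^\circ$; hence $\tilde z=P_{K^\circ}(x)=z$.

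For $(1)\Rightarrow(2)$, I would substitute the given decomposition $x=y+z$ directly into the variational characterization. For arbitrary $y'\in K$, $\langle x-y\mid y'-y\rangle=\langle z\mid y'\rangle-\langle z\mid y\rangle=\langle z\mid y'\rangle\leq 0$, using $\langle z\mid y\rangle=0$ together with $z\in K^\circ$ and $y'\in K$, so $y=P_K(x)$. The analogous calculation with the roles of $K$ and $K^\circ$ interchanged (and invoking $K^{\circ\circ}=K$, which holds for every closed convex cone) gives $z=P_{K^\circ}(x)$.

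There is no serious obstacle: the argument is essentially a direct application of the first-order optimality condition for projection. The only step requiring any care is the upgrade from inequality to equality obtained by testing the variational inequality at $y'=0$ and $y'=2y$; this is precisely where the cone hypothesis is genuinely used, since outside the cone setting only the weaker one-sided inequality $\langle x-y\mid y\rangle\leq 0$ would be available, and the clean orthogonal decomposition would fail.
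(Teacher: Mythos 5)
Your proof is correct. Note, however, that the paper does not prove this statement at all --- it is imported verbatim from \cite{Moreau1965} as a known tool --- so there is no in-paper argument to compare against; what you have written is the standard derivation from the variational inequality $\langle x-y \mid y'-y\rangle\leq 0$, with the $y'=0$, $y'=2y$ trick correctly isolating the orthogonality identity, and the identification $x-P_K(x)=P_{K^\circ}(x)$ giving $(2)\Rightarrow(1)$. One small remark: in the $(1)\Rightarrow(2)$ step you do not actually need the bipolar identity $K^{\circ\circ}=K$, since the inequality $\langle y \mid z'\rangle\leq 0$ for $z'\in K^\circ$ follows directly from $y\in K$ and the definition of the polar.
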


The following fact has been proved in \cite[Theorem 2]{nemeth_how_to_project}. Here we provide an alternative proof based on the tools developed in Section \ref{section:Projection}.
\begin{theorem}
	Let $H=\mathbb{R}^n$ and let $K$ be a latticial cone generated by vectors $b_1,b_2,\dots,b_n$ and $x\notin K$. For each subset of indices $I\subset N$, $x$ can be represented in the form
	\begin{equation}\label{decomposition:lattice}
		x=\sum_{i\in I^{\prime}} \alpha_i b_i + \sum_{j \in I} \beta_j u_j
	\end{equation}
	with $I^{\prime}:=N \backslash I$. Moreover, among the subsets $I\subset N$ of indices there exists exactly one (the case $I=\emptyset$ is not excluded, but we exclude the case $I=N$ since $x\notin K$) with the property that 
	in \eqref{decomposition:lattice} one has $\beta_j>0$ for $j \in I$ and $\alpha_i\geq 0$ for $i \in I^{\prime}$ and
	\begin{equation*}
		P_K(x)=\sum_{i\in I^{\prime}} \alpha_i b_i.
	\end{equation*}
\end{theorem}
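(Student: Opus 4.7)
The plan is to obtain the required index set $I$ from the Moreau decomposition theorem (Theorem~\ref{theorem:decomposition}) and then to recover its uniqueness from the same theorem. First observe that, for any $I \subset N$, Corollary~\ref{corollary:independence} applied to $I'$ gives that $\{b_i\}_{i \in I'} \cup \{u_j\}_{j \in I}$ is a basis of $\mathbb{R}^n$, so both the existence and the uniqueness of the real coefficients $\alpha_i,\beta_j$ in \eqref{decomposition:lattice} for a fixed $I$ are automatic; the real task is to identify the unique $I$ whose coefficients carry the prescribed signs.

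For existence I would set $y := P_K(x)$ and $z := P_{K^\circ}(x)$, so that Theorem~\ref{theorem:decomposition} provides $x = y + z$ with $y \in K$, $z \in K^\circ$ and $\langle y \ |\ z \rangle = 0$. Since $K$ is generated by the linearly independent vectors $b_1,\dots,b_n$, one writes $y = \sum_{i \in N} \alpha_i b_i$ with $\alpha_i \geq 0$ uniquely; Lemma~\ref{lemma:lattical_polar}, together with the linear independence of the $u_j$ (Corollary~\ref{corollary:independence} with empty index set), gives $z = \sum_{j \in N} \beta_j u_j$ with $\beta_j \geq 0$ uniquely. The biorthogonality $\langle b_i \ |\ u_j \rangle = -\delta_{i,j}$ then reduces the orthogonality clause to $\sum_{i \in N} \alpha_i \beta_i = 0$, and nonnegativity forces the dichotomy $\alpha_i \beta_i = 0$ for every $i$. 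Taking $I := \{i \in N : \beta_i > 0\}$ yields $\alpha_i = 0$ on $I$, so $x = \sum_{i \in I'} \alpha_i b_i + \sum_{j \in I} \beta_j u_j$ has the required sign pattern and $P_K(x) = y = \sum_{i \in I'} \alpha_i b_i$.

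For uniqueness I would argue in the converse direction: any decomposition \eqref{decomposition:lattice} satisfying the sign hypotheses automatically yields a Moreau decomposition of $x$. Given a candidate index set $\tilde I$ with $\tilde \alpha_i \geq 0$ and $\tilde \beta_j > 0$, set $\tilde y := \sum_{i \in \tilde I'} \tilde \alpha_i b_i \in K$ and $\tilde z := \sum_{j \in \tilde I} \tilde \beta_j u_j \in K^\circ$; biorthogonality again yields $\langle \tilde y \ |\ \tilde z \rangle = 0$, since $i \in \tilde I'$ and $j \in \tilde I$ are never equal. The uniqueness clause of Theorem~\ref{theorem:decomposition} then forces $\tilde y = y$ and $\tilde z = z$, after which the linear independence of the $u_j$ forces $\tilde I = \{j : \tilde \beta_j > 0\} = \{j : \beta_j > 0\} = I$. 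No step is a serious obstacle: the whole argument is powered by the relation $\langle b_i \ |\ u_j \rangle = -\delta_{i,j}$, which converts the orthogonality clause of Moreau's theorem into the combinatorial selection rule picking out $I$. The only care required is to keep the two kinds of uniqueness — uniqueness of coefficients for a fixed $I$, and uniqueness of $I$ itself — logically separated.
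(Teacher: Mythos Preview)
Your argument is correct and self-contained, but it is \emph{not} the proof the paper gives. The paper's stated purpose in this section is to rederive the N\'emeth--N\'emeth result using the Kuhn--Tucker machinery of Section~\ref{section:Projection}; accordingly, the paper obtains the existence of the distinguished index set $I$ by invoking Theorem~\ref{prop:projection_conditions} (applied to $K=\bigcap_i C_i$ via \eqref{formula:alternative_latticial}), which produces multipliers $\tilde{\nu}_i>0$ on $I$ and the representation $P_K(x)=x-\sum_{i\in I}\tilde{\nu}_i u_i$. Only afterwards does the paper bring in the Moreau decomposition, to match the $\tilde{\nu}_i$ with the $\beta_i$ and to argue uniqueness. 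Your proof, by contrast, starts from Moreau's theorem and never touches the KKT side: you read the index set $I$ directly off the orthogonality condition $\langle y\mid z\rangle=0$ via the biorthogonality $\langle b_i\mid u_j\rangle=-\delta_{ij}$, and you recover uniqueness by noting that any sign-admissible decomposition \eqref{decomposition:lattice} \emph{is} a Moreau decomposition. This is essentially the original approach of \cite{nemeth_how_to_project}, and it is shorter and more transparent; what it does not do is illustrate the link \eqref{formula:connection} between the Lagrange multipliers of Theorem~\ref{prop:projection_conditions} and the coefficients $\beta_j$, which is the paper's reason for presenting the alternative proof in the first place.
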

\begin{proof}
	The representation \ref{decomposition:lattice}  follows from Corollary \ref{corollary:independence}. To see the second assertion note that, by Theorem \ref{theorem:decomposition}, $x=a+b$, where $a\in K$, $b\in K^\circ$. Let $u_j$, $j\in N$ solve the system
	\begin{displaymath}
		\langle u_j \ |\ b_i \rangle = -\delta_{i,j},\quad i\in N.
	\end{displaymath}
	By formula \eqref{formula:alternative_latticial},   cone $K$ can be represented as
	$K=\bigcap_{i\in N} C_i$, where $C_i:=\{ h \in \mathbb{R}^n \ |\ \langle h \ |\ u_i \rangle \leq 0 \}$ for $i\in N$. By Theorem \ref{prop:projection_conditions}, there exists a set $I\subset N$ such that $\nu_i$, $i\in I$, given by formula \eqref{coeff:plus} are positive and for $i^\prime \in I^{\prime}:=N\backslash I$, the coefficients $\nu_{i^\prime}$ given by formula \eqref{coeff:notplus}
	are nonpositive and $P_K(x)=x-\sum_{i\in I}\tilde{\nu}_i u_i$, where $\tilde{\nu_i}=\nu_i/\text{det} G_{I,I}$ for $i\in I$.
	Since $\bar{x}=P_K(x)\in K$,
	\begin{equation}\label{projection:lattice}
		\bar{x}=\sum_{k\in N} \alpha_k b_k, \quad \alpha_k\geq 0\ \text{for}\ k\in N .
	\end{equation}
	Due to the linear independence of $b_k$, $k\in N$ and uniqueness of the projection, there exists exactly one system of coefficients $\alpha_k$, $k\in N$ such that $\bar{x}$ given by \eqref{projection:lattice} is the projection of $x$ onto $K$. Let $i \in I$. Taking the scalar product with vector $u_i$ at both sides of \eqref{projection:lattice} we obtain
	\begin{equation*}
		\langle \bar{x} \ |\ u_i \rangle = \sum_{k\in N} \alpha_k \langle b_k \ |\ u_i \rangle\quad  \Leftrightarrow \quad \langle \bar{x} \ |\ u_i \rangle = -\alpha_i.
	\end{equation*}
	From \eqref{projection:formula} and \eqref{system:solution} we have $\langle \bar{x} \ |\ u_i \rangle=0$ for $i \in I$. 
	Thus $\alpha_i =0 $ for any $i\in I$. Hence,  \eqref{projection:lattice} reduces to
	\begin{displaymath} 
		\bar{x}=\sum_{k\in I^{\prime}} \alpha_k b_k, \quad \alpha_k\geq 0\ \text{for}\ k\in I^{\prime} .
	\end{displaymath}
	Thus, by Theorem \ref{theorem:decomposition} and Lemma \ref{lemma:lattical_polar}, $x$ can be represented as
	\begin{equation}\label{representation:projection_lattice}
		x=\sum_{i\in I^{\prime}} \alpha_i b_i + \sum_{j \in N} \beta_j u_j \quad \alpha_i\geq 0\ \text{for}\ i \in I^{\prime} \ \text{and}\ \beta_j\geq 0 \ \text{for}\ j\in N, 
	\end{equation}
	where $P_K(x)=\sum_{i\in I^{\prime}} \alpha_i b_i$, $P_{K^\circ}(x)=\sum_{j \in N} \beta_j u_j$ and $K^\circ$ is generated by vectors $\{u_1,u_2,\dots,u_n\}$.
	Due to the linear independence of vectors $u_1,u_2,\dots,u_n$, the representation  $P_{K^\circ}(x)=\sum_{j \in N} \beta_j u_j$ is unique.
	
	Formula \eqref{representation:projection_lattice} can be rewritten as
	\begin{equation}\label{representation:projection_lattice2}
		P_K(x) = x - \sum_{j \in N} \beta_i u_i.
	\end{equation}
	Since the representation \eqref{representation:projection_lattice2} is unique and vectors $u_i$, $i\in N$ are orthogonal, by Theorem,  \ref{prop:projection_conditions} we obtain
	\begin{equation}\label{formula:connection}
		\beta_k=\left\{ \begin{array}{ll}
			\nu_k/\det G_{I,I} & \text{if}\ k\in I,\\
			0 & \text{if}\ k \notin I.
		\end{array}  \right.
	\end{equation}
	Thus, \eqref{representation:projection_lattice} can be written as
	\begin{equation*}
		x=\sum_{i\in I^{\prime}} \alpha_i b_i + \sum_{j \in I} \beta_j u_j \quad \alpha_i\geq 0\ \text{for}\ i \in I^{\prime} \ \text{and}\ \beta_j> 0 \ \text{for}\ j\in I.
	\end{equation*}
	Since the representation is unique the proof is completed.	
\end{proof}

In case of projections onto latticial cones in $\mathbb{R}^n$, the algorithm proposed in section \ref{section:results}
differs from algorithm proposed in Section 3 of \cite{nemeth_how_to_project}. The differences follows from the fact that algorithm proposed in \cite{nemeth_how_to_project} is based on the Moreau decomposition theorem, whereas our algorithm is based on the Kuhn-Tucker conditions for the corresponding convex optimization problem. Formula \eqref{formula:connection} shows the relationship between the two algorithms. Namely, $\beta_k=\tilde{\nu}_k$ for $k \in I$, where $\tilde{\nu}_k$ is given as in the proof of Theorem \ref{prop:projection_conditions}, but the formula \eqref{representation:projection_lattice} is operational only in the finite-dimensional case for vectors $\{u_1,\dots,u_n\}$ which are linearly independent. Moreover, the computational cost of algorithm proposed in \cite{nemeth_how_to_project} depends strongly on the dimensionality of $x$.

\section{The case of Banach spaces}\label{section:Banach}

Let $(B,\ \|\cdot \|)$ be a Banach space. Let $C=\bigcap_{i\in N} C_i$, where $C_i=\{ h \in B \ |\ \langle f_i \ |\ h \rangle \leq \eta_i   \}$,  $f_i\in B^*\backslash\{0\}$, $\eta_i\in \mathbb{R}$ and $\langle \cdot \ |\ \cdot \rangle $ denotes the duality mapping.

Finding the projection of $x$ onto C is equivalent to solving the optimization problem
\begin{equation}\label{prob:Banach}
	\min_{h \in C } \ \frac{1}{r}\|h-x\|^r,\quad r\geq 1.
\end{equation}
For this problem the following Pshenichnyi-Rockafellar optimality conditions  hold (see e.g. Theorem 2.9.1 of \cite{convex_analysis}).
\begin{theorem}  A point	$\bar{x}\in B$ solves
	\eqref{prob:Banach} if and only if $\partial_{\frac{1}{r}\|\cdot -x\|^r} (\bar{x})\cap (-N(C,\bar{x}))\neq \emptyset$, where $N(C,\bar{x})$ is the normal cone to $C$ at $\bar{x}$ and $N(C,\bar{x}):=\{ h^* \in B^* \ |\ \forall h\in C \quad \langle h^* \ |\ h-\bar{x} \rangle \leq 0     \}$.
\end{theorem}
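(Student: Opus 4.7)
The plan is to recast problem \eqref{prob:Banach} as an unconstrained convex minimization and combine the standard Fermat rule with the Moreau--Rockafellar sum formula for subdifferentials. Set $f(h):=\frac{1}{r}\|h-x\|^{r}$ and let $\iota_{C}$ denote the indicator function of $C$ (equal to $0$ on $C$ and $+\infty$ off $C$). Then $\bar{x}$ solves \eqref{prob:Banach} precisely when it is an unconstrained minimizer on $B$ of the proper convex function $f+\iota_{C}$, which by the elementary Fermat rule for convex functions is equivalent to $0\in\partial(f+\iota_{C})(\bar{x})$.

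Next I would invoke the Moreau--Rockafellar sum rule to split this subdifferential as $\partial(f+\iota_{C})(\bar{x})=\partial f(\bar{x})+\partial\iota_{C}(\bar{x})$. The required qualification condition is automatic: since $r\geq 1$, the function $f$ is finite and continuous on all of $B$, and in particular continuous at every point of $\mathrm{dom}\,\iota_{C}=C$, which is exactly the hypothesis needed in the Banach-space version of the sum rule. It is then standard that $\partial\iota_{C}(\bar{x})=N(C,\bar{x})$ when $\bar{x}\in C$; and if $\bar{x}\notin C$ the objective is infinite, so $\bar{x}$ is not a minimizer to begin with. Combining these observations, $\bar{x}$ minimizes \eqref{prob:Banach} iff $0\in\partial f(\bar{x})+N(C,\bar{x})$, which is the same as saying that there exists $h^{\ast}\in\partial f(\bar{x})$ with $-h^{\ast}\in N(C,\bar{x})$, i.e.\ $\partial f(\bar{x})\cap(-N(C,\bar{x}))\neq\emptyset$.

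The main technical point is verifying the qualification condition for the sum rule in the general Banach setting; this is precisely the content of Theorem 2.9.1 of \cite{convex_analysis} already cited in the statement, so one may simply invoke that theorem directly rather than reproving it. A minor subtlety worth flagging is that, in the absence of reflexivity, both $\partial f(\bar{x})$ and $N(C,\bar{x})$ live in the topological dual $B^{\ast}$ via the duality pairing $\langle\cdot\ |\ \cdot\rangle$ used throughout the section, and that making $\partial f(\bar{x})$ explicit requires information about the Gateaux differentiability of the norm --- exactly the difficulty flagged by the author in the introduction and which motivates treating only particular cases in what follows.
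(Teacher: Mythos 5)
Your argument is correct and is the standard route: Fermat's rule for the proper convex function $f+\iota_C$ combined with the Moreau--Rockafellar sum rule (whose qualification condition holds because $f=\frac{1}{r}\|\cdot-x\|^r$ is continuous on all of $B$), together with the identification $\partial\iota_C(\bar{x})=N(C,\bar{x})$ for $\bar{x}\in C$. The paper gives no proof of this statement, merely citing Theorem 2.9.1 of \cite{convex_analysis}, which is established by exactly this argument, so your proposal coincides with the intended one; the only cosmetic point is that for the "if" direction one should read $N(C,\bar{x})=\partial\iota_C(\bar{x})=\emptyset$ when $\bar{x}\notin C$, as you implicitly do.
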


In the case of strictly convex reflexive Banach space $X$ every closed convex set $D$ is Chebyshev, i.e. for each $x\in X$ there exists a \textit{unique} point $P_D(x)\in D$ such that $\|x-P_D(x)\|=\inf \{ \| x-d\|,\ d\in D\}$.

Let $I(x):=\{i \in N \ |\ \langle f_i \ |\  x\rangle =\eta_i  \}$. We have $N(C,x)=cone(\{f_i, i\in I(x) \})$. When $k(\cdot):=\frac{1}{r}\|\cdot-x\|^r$ is Gateaux differentiable on $B$ we have $\bar{x}=P_C(x)$ if, and only if,
\begin{align}
	& 0\in k^\prime(\bar{x})+N(C,\bar{x})\notag\\
	& \iff \exists\ \nu_{i}\geq 0,\ i\in I(\bar{x})\quad k^\prime(\bar{x})=-\sum_{i\in I(\bar{x})}  \nu_i f_i\notag\\
	& \iff \exists\ \nu_{i}\geq 0,\ i\in I(\bar{x})\ \forall y\in B \quad \langle  k^\prime(\bar{x}) \ |\ y \rangle =  -\sum_{i\in I(\bar{x})} \nu_i \langle f_i \ |\ y \rangle\notag\\
	& \iff \exists\ \nu_{i}\geq 0,\ i\in I(\bar{x})\ \forall y\in B \quad k^\prime( \bar{x} , y ) =  -\sum_{i\in I(\bar{x})} \nu_i \langle f_i \ |\ y \rangle.\label{formula:projection_Banach_general}
\end{align}

In Banach spaces $\ell_p$, $p>1$  of all sequences $f=\{f_1,f_2,\dots,\}$ such that
\begin{displaymath}
	\|f\|_{\ell_p}:=\left(\sum_{i=1}^{+\infty} \|f_i\|^p\right)^\frac{1}{p}<+\infty
\end{displaymath}
it was shown in \cite[Example 8.1]{james1947} that the directional derivative of $k(\cdot)=\frac{1}{p}\|\cdot-x\|_{\ell_p}^p$ at $u$ in direction $v$ is given by formula
\begin{equation}\label{directionalderivative_lp}
	k^\prime(u,v)= \sum_{i=1}^{+\infty} |u_i-x_i|^{p-2}(u_i-x_i) v_i.
\end{equation}

In Banach spaces $L_p(\Omega)$, $p\geq 1$ of all functions $f:\ L_p(\Omega)\rightarrow \mathbb{R}$ such that
\begin{displaymath}
	\|f\|_{L^p}:=\left(\int_{t\in \Omega} |f(t)|^p\, \mu(dt)\right)^{\frac{1}{p}}<+\infty
\end{displaymath}
it was shown in \cite[Example 8.2]{james1947}, \cite[Example 13.12]{applied_analysis} that the directional derivative of $k(\cdot)=\frac{1}{p}\|\cdot-x\|_{\ell_p}^p$ at $u$ in direction $v$ is given by formula
\begin{equation}\label{directionalderivative_Lp}
	k^\prime(u,v)= \int_{\Omega} |u(t)-x(t)|^{p-2}(u(t)-x(t)) v(t) \, \mu(dt).
\end{equation}


We start by discussing formulas for projections in spaces $\ell_p$, $p>1$. 
For any matrix $L=(\lambda_i^j)$, $\lambda_i^j\in \mathbb{R}$, $i,j \in N$ and any finite subsets $A_1,A_2\subset N$ let $L_{A_1,A_2}$ be the matrix with  entries $\lambda_j^i$, where $i \in A_1$, $j\in A_2$. Let $e_i=(0,\dots,0,1,0\dots)$, with $1$ at the position $i$ denote the standard basis of $\ell_p$.

\begin{proposition}\label{prop:projection_lp}
	In the space $\ell_p$, $p>1$ consider $C_i=\{ h \in  \ell_p \ |\ \langle f_i \ |\ h \rangle \leq \eta_i \}$, where $\eta_i\in \mathbb{R}$, $f_i=\sum_{j\in \mathbb{N}} \lambda_{j}^i e_j \in \ell_{q}\backslash\{0\}$,  $i \in N$. Let $W:=\{ k \in \mathbb{N} \ |\ \forall i \in N \  \lambda_k^i=0   \}$.
	Then $\bar{x}$ is the projection of $x\in \ell_p\backslash C$ onto $C$ if and only if there exists $I\subset I(\bar{x})$ such that $\det L_{I,I} \neq 0$, and for $i \in I$
	\begin{displaymath}
		\begin{array}{l}
			\bar{x}_i=\dfrac{1}{\det L_{I,I}} \sum\limits_{k\in I} \tilde{\eta}_k B_{I}^k B_{I}^i \det L_{I\backslash \{k\} ,I\backslash \{i\} },  \\
			\dfrac{1}{\det L_{I,I}} \sum\limits_{k\in I} \xi_k B_{I}^k B_{I}^i \det L_{I\backslash \{k\} ,I\backslash \{i\} } >0, \\ 
		\end{array}
	\end{displaymath}
	for  $j \in \mathbb{N}\backslash(I\cup W)$
	\begin{equation}\label{cond:nonactive_index_lp}	|\bar{x}_j-x_j|^{p-2}{(\bar{x}_j-x_j)} =- \sum_{i\in I} \lambda_j^i \frac{1}{\det L_{I,I}}\left( \sum_{k\in I} \xi_k B_{I}^k B_{I}^i \det L_{I\backslash \{k\} ,I\backslash \{i\} }  \right)
	\end{equation}
	where $\tilde{\eta}_k=\eta_k-\sum_{j\in \mathbb{N}\backslash I} \lambda_j^k \bar{x}_j$,  $\xi_k=-|\bar{x}_k-x_k|^{p-2}(\bar{x}_k-x_k)$, $k \in I$,
	and
	\begin{displaymath}
		\bar{x}_k=x_k\quad \text{ for }\ k \in W.
	\end{displaymath}
	
\end{proposition}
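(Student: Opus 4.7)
The plan is to convert the abstract optimality condition \eqref{formula:projection_Banach_general} into coordinate-wise scalar equations on $\ell_p$, and then solve the finite blocks of those equations by Cramer's rule. Since $\ell_p$ with $p>1$ is strictly convex and reflexive, every closed convex set is Chebyshev, so there is a unique $P_C(x)$ and it suffices to characterise it. I would specialise $k(\cdot):=\tfrac{1}{p}\|\cdot-x\|_{\ell_p}^p$ via the directional-derivative formula \eqref{directionalderivative_lp} and test \eqref{formula:projection_Banach_general} against each standard basis vector $y=e_j$. This yields, for some $\nu_i\geq 0$, $i\in I(\bar{x})$, the pointwise relation
\[
|\bar{x}_j-x_j|^{p-2}(\bar{x}_j-x_j)=-\sum_{i\in I(\bar{x})}\nu_i\,\lambda_j^i,\qquad j\in\mathbb{N}.
\]

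Next I would partition $\mathbb{N}$ into three parts according to the role of each coordinate. For $j\in W$ the right-hand side vanishes because every $\lambda_j^i=0$, giving $\bar{x}_j=x_j$. On a subset $I\subset I(\bar{x})$ of active indices the constraints $\langle f_i\,|\,\bar{x}\rangle=\eta_i$ reduce, after moving the $\ell\notin I$ contributions to the right-hand side, to the finite linear system $L_{I,I}[\bar{x}_\ell]_{\ell\in I}=[\tilde{\eta}_i]_{i\in I}$, solved by Cramer's rule; cofactor expansion along the replaced column yields the stated formula for $\bar{x}_i$, with the permutation signs collapsing into the factors $B_I^{k}B_I^{i}$. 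In parallel, the coordinate-wise equation evaluated at $j\in I$ is the transposed system $(L_{I,I})^{\!\top}[\nu_i]_{i\in I}=[\xi_j]_{j\in I}$, whose Cramer solution is the expression shown for $\nu_i/\det L_{I,I}$; positivity of these coefficients is precisely the complementary-slackness condition for the active multipliers. For $j\in\mathbb{N}\setminus(I\cup W)$, the coordinate-wise equation with the $\nu_i$'s just determined substituted in becomes exactly \eqref{cond:nonactive_index_lp}.

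The reverse direction is then immediate: given $\bar{x}$, $I$, and the listed formulas, substituting back into \eqref{formula:projection_Banach_general} verifies the optimality condition and identifies $\bar{x}=P_C(x)$. The forward direction first produces some $\nu_i\geq 0$, $i\in I(\bar{x})$, from the optimality condition, and then restricts to $J:=\{i\in I(\bar{x}):\nu_i>0\}$. I expect this subset-reduction step to be the main obstacle: unlike the Hilbert setting of Proposition~\ref{prop:system_coeff}, where $G$ is a symmetric Gramian and Lemma~\ref{lemma:tech} extracts a linearly independent subsystem with a strictly positive Gram minor, the matrix $L$ here is a general matrix of duality pairings, neither symmetric nor positive semidefinite. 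The passage from $J$ to a further subset $I\subset J$ with $\det L_{I,I}\neq 0$ must therefore be argued by a different route, e.g.\ by extracting a maximal invertible square block inside $L_{J,J}$ and verifying that, after absorbing the discarded columns into the residual terms $\tilde{\eta}_i$, the surviving multipliers $\nu_i$ remain strictly positive. Once this reduction is secured, everything else is Cramer's-rule bookkeeping and the sign identifications $(-1)^{|s_I(a)|+|s_I(b)|}=B_I^{a}B_I^{b}$.
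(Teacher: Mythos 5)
Your overall route coincides with the paper's: the same specialisation of \eqref{formula:projection_Banach_general} via \eqref{directionalderivative_lp}, the same coordinate-wise equations obtained by testing against the basis vectors $e_j$, the same three-way split of $\mathbb{N}$ into $W$, $I$ and the remaining indices, and the same two Cramer systems (one for $[\bar{x}_i]_{i\in I}$ with right-hand side $[\tilde{\eta}_i]_{i\in I}$, one for the multipliers with right-hand side $[\xi_i]_{i\in I}$), with \eqref{cond:nonactive_index_lp} obtained by back-substitution. The one place where you depart from the paper is the subset-reduction step, and there your proposal has a genuine gap. You rule out Lemma~\ref{lemma:tech} on the grounds that $L$ is neither symmetric nor positive semidefinite, but the lemma is never applied to $L$: the paper applies it to the truncated coefficient vectors $u_i:=[\lambda_k^i]_{k\in I(\bar{x})}$, which live in the finite-dimensional Hilbert space $\mathbb{R}^{|I(\bar{x})|}$. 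The coordinate equations \eqref{formula:proj:l_p} say precisely that $[\xi_k]_{k\in I(\bar{x})}$ is a conic combination of these vectors with coefficients $\nu_i\geq 0$, and Lemma~\ref{lemma:tech} --- a conic Carath\'eodory argument phrased through the Gram matrix of the $u_i$, which is symmetric positive semidefinite no matter what $L$ looks like --- re-selects a subfamily $I$ with strictly positive coefficients and linearly independent $u_i$, $i\in I$, as in \eqref{cond:independent_system_lp}. The tool you discard is exactly the one that closes the step.

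Your substitute does not work as stated. A maximal invertible square block of $L_{J,J}$ is in general a submatrix $L_{I_1,I_2}$ with $I_1\neq I_2$, whereas the conclusion requires $\det L_{I,I}\neq 0$ with equal row and column index sets; and once you delete some multipliers the restricted linear system changes, so there is no reason the surviving $\nu_i$ remain positive --- guaranteeing this is precisely what the Carath\'eodory-type re-selection accomplishes, and it cannot be ``verified'' a posteriori from rank considerations alone. In fairness, the paper is itself terse at the final inch of this step: linear independence of the truncated rows $u_i$, $i\in I$, only guarantees that \emph{some} $|I|\times|I|$ minor of $L_{I,I(\bar{x})}$ is nonsingular, not that the block $L_{I,I}$ is, so a fully airtight write-up should address the choice of column set. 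But as written your proposal replaces a workable argument with one that is both incomplete and, in its ``maximal block'' form, incompatible with the shape of the required conclusion.
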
	
\begin{proof}
	By \eqref{formula:projection_Banach_general} and \eqref{directionalderivative_lp}, $\bar{x}\in C$ is the projection of $x\in \ell_p \backslash C$ onto $C=\bigcap_{i\in N} C_i$ if and only if
	\begin{equation}\label{formula:projection_Banach_lp}
		\exists\ \{\nu_{j}\}_{j\in  I(\bar{x})}\geq 0\ \forall y\in \ell_p \quad \sum_{k=1}^{+\infty} |\bar{x}_k-x_k|^{p-2}{(\bar{x}_k-x_k)} y_k=-\sum_{i\in I(\bar{x})} \nu_i \langle f_i \ |\ y \rangle,
	\end{equation}
	where $I(\bar{x})=\{i \in N \ |\ \langle f_i \ |\ \bar{x}\rangle =\eta_i \}$.
	Formula \eqref{formula:projection_Banach_lp} is equivalent to the following two conditions 
	\begin{align}
		\exists\ \{\nu_{i}\}_{i\in  I(\bar{x})}\geq 0\ \forall_{k\in \mathbb{N}\backslash W} \quad   &|\bar{x}_k-x_k|^{p-2}{(\bar{x}_k-x_k)} =- \sum_{i\in I(\bar{x})} \lambda_k^i\nu_i,\label{formula:proj:l_p}\\
		\forall_{k\in W} \quad   &|\bar{x}_k-x_k|^{p-2}{(\bar{x}_k-x_k)} =0.\label{formula:proj:l_p_else}
	\end{align}
	
	These conditions are obtained by taking $y=e_k=(0,\dots,0,1,0,\dots)$, $k\in \mathbb{N}$, in \eqref{formula:projection_Banach_lp}.
	Hence for all $k\in W$, $\bar{x}_k=x_k$. For any $i\in I(\bar{x})$ we have
	\begin{equation}\label{projection:lp_active_indexes}
		\langle f_i \ |\ \bar{x} \rangle = \eta_i \quad \iff \quad \sum_{j\in \mathbb{N}} \lambda_j^i \bar{x}_j = \eta_i .
	\end{equation}
	If  $\det L_{I(\bar{x}),I(\bar{x})} =0$, by applying Lemma \ref{lemma:tech} to vectors $u_i:=[\lambda_k^i]_{k\in I(\bar{x})}$, $i\in I(\bar{x})$, we obtain the existence of an index set $ I\subset I(\bar{x})$, $I\neq \emptyset$ such that  $\det L_{I,I}\neq 0$ and
	\begin{equation}\label{cond:independent_system_lp}
		\forall i\in I(\bar{x}) \ \exists \tilde{\nu}_k>0,\ k \in I\quad \sum_{k \in I(\bar{x})} \lambda_k^i \nu_k =\sum_{k \in I} \lambda_k^i \tilde{\nu}_k.
	\end{equation}
	If $\det L_{I(\bar{x}),I(\bar{x})} \neq 0$ put $I:=I(\bar{x})$. Let $\tilde{\eta}_i:=\eta_i-\sum_{j\in \mathbb{N}\backslash I} \lambda_j^i \bar{x}_j$, $i\in I$. By \eqref{projection:lp_active_indexes}, for any $i\in I$
	\begin{displaymath}
		\sum_{k\in \mathbb{N}} \lambda_k^i \bar{x}_k = \eta_i\quad \iff \quad \sum_{k\in I} \lambda_k^i \bar{x}_k= \eta_i-\sum_{j\in \mathbb{N}\backslash I} \lambda_j^i \bar{x}_j,
	\end{displaymath}
	hence 
	\begin{displaymath}
		L_{I,I} [\bar{x}_i]_{i\in I}=[\tilde{\eta}_i]_{i\in I},
	\end{displaymath}
	and consequently
	\begin{displaymath}
		\bar{x}_i=\frac{1}{\det L_{I,I}} \sum_{k\in I(\bar{x})} \tilde{\eta}_k B_{I}^k B_{I}^i \det L_{I\backslash \{k\} ,I\backslash \{i\} },  \quad i\in I.
	\end{displaymath}
	Let $\xi_k:=-|\bar{x}_k-x_k|^{p-2}(\bar{x}_k-x_k)$, $k\in I$. By \eqref{formula:proj:l_p} and \eqref{cond:independent_system_lp},
	\begin{displaymath}
		\exists \tilde{\nu}_i>0,\ i\in I\ \forall k\in I \quad \sum_{i \in I} \lambda_k^i \tilde{\nu}_i = \xi_k.
	\end{displaymath}
	Hence
	\begin{displaymath}
		\exists \tilde{\nu}_i>0,\ i\in I \quad L_{I,I} [\tilde{\nu}_i]_{i\in I}=[\xi_i]_{i\in I},
	\end{displaymath}
	and consequently
	\begin{displaymath}
		\tilde{\nu}_i=\frac{1}{\det L_{I,I}} \sum_{k\in I} \xi_k B_{I}^k B_{I}^i \det L_{I\backslash \{k\} ,I\backslash \{i\} }, \quad i\in I.
	\end{displaymath}
	Thus, for all $j\in \mathbb{N}\backslash(I\cup W)$
	\begin{displaymath}
		|\bar{x}_j-x_j|^{p-2}{(\bar{x}_j-x_j)} =- \sum_{i\in I} \lambda_j^i \frac{1}{\det L_{I,I}}\left( \sum_{k\in I} \xi_k B_{I}^k B_{I}^i \det L_{I\backslash \{k\} ,I\backslash \{i\} }  \right),
	\end{displaymath}
	which completes the proof.
	%
\end{proof}

\begin{remark}
	Let us note that if $\{m,m+1,\dots\}\subset W$, where $m\in \mathbb{N}$, then condition \eqref{cond:nonactive_index_lp} is actually required for $i\in \{1,\dots,m\}\backslash (I\cup W)$ only.
\end{remark}

\begin{example}\label{example:1}
	Recall that $N=\{1,\dots,n\}$. Let $J\subset N$, $J \neq \emptyset$. Let $|\delta_k|=1$ if $k\in J$ and $\delta_k=0$ otherwise. Consider $C_i=\{ h \in  \ell_p \ |\ \langle f_i \ |\ h \rangle \leq \eta_i \}$, where $f_i=\{0,\dots,0,\delta_i,0\dots\} \in \ell_{q}$, $\eta_i\in \mathbb{R}$, $i \in N$, i.e. $n$ is the highest index $i$ such that $\delta_i\neq 0$ appearing in the sets $C_i$ and $C_k=B$ for $k\in N\backslash J$. 
	
	Let $\bar{x}$ be the projection of $x\in\ell_p\backslash C $ onto $C=\bigcap_{i\in N} C_i$, $C\neq\emptyset$. By \eqref{formula:proj:l_p}-\eqref{formula:proj:l_p_else} we obtain
	\begin{align*}
		\exists\ \{\nu_{i}\}_{i\in  I(\bar{x})}\geq 0\ \forall_{k\in I(\bar{x})} \quad   &|\bar{x}_k-x_k|^{p-2}{(\bar{x}_k-x_k)} =- \delta_k \nu_k\\
		\forall i \in J\backslash I(\bar{x}) \quad 
		& \bar{x}_i=x_i.
	\end{align*}
	
	%
	
	We will show that $\bar{z}$ given by
	\begin{equation}\label{projection:formula_z_lp}
		\bar{z}:=x-\sum_{k\in J} \xi_k e_k ,\text{where}\  
		\xi_k:=\left\{\begin{array}{ll}
			x_k-\delta_k\eta_k & \text{when}\ \delta_k x_k\geq \eta_k,\\
			0 & \text{otherwise}  .
		\end{array} \right.
	\end{equation}
	is a projection of $x$ onto $C$. For any $i\in I(\bar{x})$ we have
	\begin{displaymath}
		\langle f_i \ |\ \bar{x} \rangle = \eta_i \quad \iff \quad \delta_i \bar{x}_i = \eta_i \quad \iff \quad  \bar{x}_i = \delta_i\eta_i.
	\end{displaymath}	
	
	Consider the case $j\notin I(\bar{x})$. Then $\bar{x}_j=x_j$, and moreover, if $j\in J$, then $x_j=\bar{x}_j\leq \eta_i$ since $\bar{x}\in C_j$.
	
	Consider the case $i \in I(\bar{x})$ and $x_i>\delta_i\eta_i$. Then $\bar{x}_i=\delta_i\eta_i$ and by \eqref{formula:proj:l_p}
	\begin{displaymath}
		0>|\delta_i\eta_i-x_i|^{p-2}(\delta_i\eta_i-x_i)=-\delta_i\nu_i.
	\end{displaymath}
	Since $\nu_i\geq 0$, $\delta_i=1$ and $\delta_i x_i>\eta_i$.
	
	Consider the case $i \in I(\bar{x})$ and $ x_i\leq \delta_i\eta_i$. Then $\bar{x}_i=\delta_i\eta_i$ and by \eqref{formula:proj:l_p}
	\begin{displaymath}
		0\leq |\delta_i\eta_i-x_i|^{p-2}(\delta_i\eta_i-x_i)=-\delta_i\nu_i.
	\end{displaymath}
	Since $\nu_i\geq 0$ one of the following appears:
	\begin{itemize}
		\item $\delta_i=1$ and $x_i=\delta_i \eta_i=\bar{x}_i$,
		\item $\delta_i=-1$, $\delta_i x_i\geq \eta_i$.
	\end{itemize}	
	
	Thus $\bar{x}_i=\delta_i\eta_i$ when $\delta_i x_i\geq  \eta_i$, $i\in J$ and $\bar{x}_i=x_i$ otherwise, which proves \eqref{projection:formula_z_lp}.
\end{example}

\begin{remark}\label{example:2}
	Let $B=L^p(\Omega)$, $p\geq 1$. 
	Let $f_i\in L^q(\Omega)$ 
	and $\eta_i\in \mathbb{R}$, $i\in N$.	
	Consider $C_i=\{ g \in L^p \ |\ \langle f_i \ |\ g \rangle \leq \eta_i \}$, $i\in \{1,\dots,n\}$. Let $W:=\{ t \in \Omega \ |\ \forall i\in N  \ f_i(t)=0 \}$. Then by \eqref{formula:projection_Banach_general} and \eqref{directionalderivative_Lp}, $\bar{x}$ is a projection of $x \in L^p\backslash C$ onto $C=\bigcap_{i\in N} C_i$ if any only if there exists $\nu_{i}\geq 0,\ i\in I(\bar{x})$ such that for all $y\in B$
	\begin{equation}\label{formula:projection_Banach_Lp}
		\int_{\Omega} |\bar{x}(t)-x(t)|^{p-2}(\bar{x}(t)-x(t)) y(t) \, \mu(dt)
		=  -\sum_{i\in I(\bar{x})} \nu_i 	\int_{\Omega} f_i(t) y(t) \, \mu(dt) ,
	\end{equation}
	where $I(\bar{x})=\{i\in N \ |\ 	\int_{\Omega} f_i(t)\bar{x}(t) \, \mu(dt)  = \eta_i  \}$.
	
	Taking onto account $y(t)=0$ for $t\in\Omega\backslash W$ and $y(t)=\bar{x}(t)-x(t)$ for $t\in W$ in \eqref{formula:projection_Banach_Lp} we obtain
	\begin{displaymath}
		\int_{W} |\bar{x}(t)-x(t)|^{p} \, \mu(dt)
		=  0 .
	\end{displaymath}
	Hence $\bar{x}(t)=x(t)$ for almost all $t\in W$. 
\end{remark}
\begin{remark}
	Let us note, that in nonreflexive spaces, the projection may not exist. In consequence, the conditions given in Remark \ref{example:2}, when applied to the space $L_1(\Omega)$ do not assure the existence of $\nu_{i}\geq 0,\ i\in I(\bar{x})$ such that for all $y\in B$ formula \eqref{formula:projection_Banach_Lp} holds. Proposition \ref{prop:projection_lp} concerns the spaces $\ell_p$, $p>1$ because in the space $l_1$ we do not have the formula for the directional derivative of the norm.
\end{remark}

\section{Conclusions}
The main advantage of our approach is 
that no requirement is needed  for any mutual relationships between vectors $u_i$, $i\in N$, which generate the halfspaces. In our approach the problem of finding projection reduces to the problem of finding Kuhn-Tucker multipliers $\nu_1,\dots,\nu_n$, the number of which coincides with the number of halfspaces and is independent of the dimensionality of $x$. The crucial point of the result is that we work in Hilbert spaces. We showed through examples that, in general, in Banach space, even if the norm is Gateaux-differentiable and $C$ is of particular form one cannot expect explicit formulas for projections onto $C$.

\bibliographystyle{plain}
\bibliography{references}
\end{document}